\newcommand{\ev}{\mathop{\mathbf{ev}}\nolimits}
\newcommand{\F}{\mathbb{F}}
\newcommand{\GlnF}{\mathop{{\rm GL}(n,\F)}\nolimits}
\newcommand{\GlF}{\mathop{{\rm GL}(3,\F)}\nolimits}
\newcommand{\GlC}{\mathop{{\rm GL}(3,\mathbb{C})}\nolimits}
\newcounter{tbn}
\newcounter{pict}
\newcounter{mcasenum}
\newtheorem{theorem}{Theorem}[section]
\newtheorem{lemma}[theorem]{Lemma}
\newtheorem*{lemma*}{Lemma}
\newtheorem*{corollary*}{Corollary}
\newtheorem{proposition}[theorem]{Proposition}
{\theoremstyle{definition} \newtheorem{definition}[theorem]{Definition}
\newtheorem*{definition*}{Definition}
\newtheorem{example}[theorem]{Example}
\newtheorem*{example*}{Example}
\newtheorem*{examples*}{Exampes}
\newtheorem{remark}[theorem]{Remark}
\newtheorem*{remark*}{Remark}

\newtheorem*{result*}{Result}
\newtheorem*{comment*}{Comment}

\begin{document}

\title{\bf Degenerations of complex associative algebras of dimension three via Lie and Jordan algebras
%\footnote{2010 MS Classification numbers ....\newline
%\mbox{\hspace{0.4cm} Key words}: ... }
}

%\author{N.M. Ivanova$^\dag$ and C.A. Pallikaros$^{\ddag}$}

\author{N. M. Ivanova%
\thanks{
\textit{\noindent Institute of Mathematics of NAS of Ukraine,~3 Tereshchenkivska Str., 01601 Kyiv, Ukraine and %Department of Computer Science and Engineering,
European University of Cyprus, Nicosia, Cyprus.}
{E-mail: ivanova.nataliya@gmail.com}
}
\ and\ \
C. A. Pallikaros%
\thanks{%
\textit{Department of Mathematics and Statistics,
University of Cyprus,
P.O.Box 20537,
1678 Nicosia,
Cyprus.}
{E-mail: pallikar@ucy.ac.cy}
}
}
\date{December 20, 2022}

\maketitle

\begin{abstract}
Let $\boldsymbol\Lambda_3(\mathbb C)\,(=\mathbb C^{27})$ be the space of structure vectors of $3$-dimensional algebras over $\mathbb C$ considered as a $G$-module via the action of $G=\GlC$ on $\boldsymbol\Lambda_3(\mathbb C)$ `by change of basis'.
We determine the complete degeneration picture inside the algebraic subset $\mathcal A^s_3$ of $\boldsymbol\Lambda_3(\mathbb C)$ consisting of associative algebra structures via the corresponding information on the algebraic subsets $\mathcal L_3$ and $\mathcal J_3$ of $\boldsymbol\Lambda_3(\mathbb C)$ of Lie and Jordan algebra structures respectively.
This is achieved with the help of certain $G$-module endomorphisms $\phi_1$, $\phi_2$ of $\boldsymbol\Lambda_3(\mathbb C)$ which map $\mathcal A^s_3$ onto algebraic subsets of $\mathcal L_3$ and $\mathcal J_3$ respectively.
%\\[1.5ex]
%Key words: ...
%\\
%%2010 MSC Classification: 14D06; 17B45; 20G15; 14L30
\end{abstract}

\section{Introduction} % Background --- possible directions for project

The notion of degeneration or contraction arises in various physical investigations.
It was first introduced by Segal~\cite{Segal1951} and In\"on\"u and Wigner~\cite{InonuWigner1953,InonuWigner1954} in the case of Lie groups and Lie algebras in order to link certain properties of the classical mechanics, the relativistic mechanics and the quantum mechanics.
The main idea involved was to try to obtain certain properties of one of the physical theories using the corresponding properties of another theory via a kind of limiting process.
It turned out that the classical mechanics can be studied as a limit case of quantum mechanics  as the Planck constant tends to zero.
The symmetry group of relativistic mechanics (the Poincar\'e group) can be viewed as a contraction of the symmetry group of classical mechanics (the Galilean group) if we assume that the light velocity $c\to\infty$.
The notion of degeneration also has applications in other branches of mathematics. %, see for example~\cite{Kraft1982}.

The present paper is concerned with the investigation of degenerations within certain classes of algebras.
The first classification of complex low-dimensional associative algebras has been made by B. Peirce in 1870, see also~\cite{Peirce1881}.
Although this classification contains only the so called ``pure algebras'', the approach used by Peirce can be generalized to more general classes of algebras.
%presented classification of 3- and 4-dimensional complex associative 3-dimensional algebras with~1.
Complex associative 3-dimensional algebras with a unit were classified by P. Gabriel in~\cite{Gabriel1975}.
Moreover, in~\cite{Gabriel1975} Gabriel also constructs all degenerations within the class of 2-dimensional complex associative algebras and all degenerations within the classes of 3- and 4-dimensional complex associative algebras with a unit.
Modern classification of all complex 3-dimensional associative algebras can be found in~\cite{FialowskiPenkava2009,KobayashiShirayanagiTsukada2021}.

Jordan algebras of dimension three over an algebraically closed field of characteristic not equal to 2 or 3 are classified in~\cite{KashubaShestakov2007}, where the authors also determine the irreducible components of the variety of 3-dimensional Jordan algebras.
See also~\cite{GorshkovKaygorodovPopov2021} for a description of the degenerations within the variety of complex 3-dimensional Jordan algebras.
The degenerations within the variety of complex 3-dimensional Lie algebras have been constructed in~\cite{Agaoka1999,NesterenkoPopovych2006}.

It will be convenient at this point to introduce some notation.
Let $\boldsymbol\Lambda_3(\mathbb C)\,(=\mathbb C^{27})$ be the space of structure vectors of complex 3-dimensional algebras.
We can consider $\boldsymbol\Lambda_3(\mathbb C)$ as a $\mathbb C G$-module via the natural (linear) action of $G=\GlC$ on $\boldsymbol\Lambda_3(\mathbb C)$ `by change of basis'.
We say that there is a degeneration from $\boldsymbol\lambda$ to $\boldsymbol\mu$ (with $\boldsymbol\lambda,\boldsymbol\mu\in\boldsymbol\Lambda_3(\mathbb C)$) if $\boldsymbol\mu$ belongs to the Zariski-closure of the $G$-orbit of $\boldsymbol\lambda$.

The aim of the present paper is to determine the complete degeneration picture  inside the algebraic subset $\mathcal A^s_3$ of $\boldsymbol\Lambda_3(\mathbb C)$ consisting of the associative algebra structures, via the corresponding information on the algebraic subsets $\mathcal L_3$ and $\mathcal J_3$ of $\boldsymbol\Lambda_3(\mathbb C)$ of Lie and Jordan algebras respectively.
In order to achieve this, we define certain $\mathbb C G$-module endomorphisms $\phi_1$, $\phi_2$ of $\boldsymbol\Lambda_3(\mathbb C)$ which map $\mathcal A_3^s$ onto algebraic subsets of $\mathcal L_3$ and $\mathcal J_3$ respectively.
A key role in our approach is played by the explicit computation of the $B$-orbit (where $B$ is a Borel subgroup of $G$) of appropriate elements of $\boldsymbol\Lambda_3(\mathbb C)$ and the consideration of the intersection of the closure of these $B$-orbits with certain algebraic subsets of $\boldsymbol\Lambda_3(\mathbb C)$ which played some part in~\cite{IvanovaPallikaros2019} and~\cite{PallikarosWard2020}.
Locating various polynomials in the vanishing ideal of such orbits is one of the important ingredients in some of our arguments.
This approach not only allowed us to rule out the possibility of degeneration in certain cases where this was not easy to achieve via the various necessary conditions for degeneration  commonly used in literature, but also provided a very practical means of constructing degenerations in certain cases where a degeneration actually exists.
The idea of explicitly computing an orbit and locating polynomials in the vanishing ideal was already used in~\cite{IvanovaPallikaros2019b} and this led, as a by-product, to the determination of various degenerations between 3-dimensional Lie algebras over an arbitrary field.

The paper is organized as follows:
In Section~\ref{SectionPrelim} we include some preliminary lemmas and discuss some of their applications via which we give the flavour of the general techniques that will be used later on in the paper.
In Sections~\ref{SectionNecCond} and~\ref{SectionPhi1Phi2} we recall some necessary conditions for degeneration and also the defining conditions for certain algebraic sets which will play a key role in the paper.
Moreover, in Section~\ref{SectionPhi1Phi2} we discuss some basic properties of the maps $\phi_1$ and $\phi_2$.
In Section~\ref{Section3dimAlg} we recall certain results regarding the varieties $\mathcal A^s_3$, $\mathcal L_3$ and $\mathcal J_3$ in the framework that has been built in the earlier sections.
Finally, in Section~\ref{SectionDegen3dim} which contains the main results of the paper, the degeneration picture inside the variety $\mathcal A^s_3$ is completely determined.

We remark that in~\cite{MohammedRakhimovShSaidHusain2017}, the authors aim to determine the irreducible components of the variety~$\mathcal A^s_3$.
Their approach is based entirely on the computation of various algebra invariants, which they use together with certain necessary conditions for degeneration, in order to rule out the possibility of degeneration between various algebras.
For the purposes of their work they do not include any constructions of degenerations or comment further in the cases where the possibility of degeneration is open.
There are, however, some inaccuracies in their computations of certain algebra invariants, sometimes leading to inaccuracies regarding the possibility of degeneration.

\section{Preliminaries}\label{SectionPrelim}
Throughout this paper $\F$ denotes an arbitrary infinite field and $n$ a positive integer.
We also let  $G=\GlnF$. % where $n$ is a positive integer and $\F$ an arbitrary field.
We fix $V$ to be a finite dimensional $\F$-vector space with $\dim_{\F}V=n$.
We call $\mathfrak{g}$ an algebra structure on~$V$ if $\mathfrak{g}$ is an $\F$-algebra having $V$ as its underlying vector space
(so $\mathfrak{g}$ is a not necessarily associative algebra which has multiplication defined via a suitable $\F$-bilinear map $[,]_{\mathfrak{g}}\colon V\times V\to V:$ $(u,v)\mapsto [u,v]_{\mathfrak{g}}$ for $u,v\in V$).
We denote by~$\boldsymbol A$ the set of all algebra structures on~$V$.

If $(u_1,\ldots,u_n)$ is an ordered $\F$-basis of $V$, the multiplication in $\mathfrak g=(V, [,])\in\boldsymbol A$ is completely determined by the structure constants $\alpha_{ijk}\in\F$ ($1\le i,j,k\le n$) given by $[u_i,u_j]=\sum_{k=1}^n\alpha_{ijk}u_k$.
We will regard this set of structure constants $\alpha_{ijk}$ as an ordered $n^3$-tuple by imposing an ordering on the ordered triples $(i,j,k)$, for example we could choose the lexicographic ordering.
We call the ordered $n^3$-tuple $\boldsymbol\alpha=(\alpha_{ijk})\in\F^{n^3}$ the structure vector of $\mathfrak g\in\boldsymbol A$ relative to the $\F$-basis $(u_1,\ldots,u_n)$ of~$V$.
Also denote by $\boldsymbol\Lambda\,(=\F^{n^3})$ the set $\{\boldsymbol\lambda=(\lambda_{ijk})\in\F^{n^3}\colon \boldsymbol\lambda$ occurs as the structure vector of some $\mathfrak g\in\boldsymbol A$ relative to some ordered basis of $V\}$.
Clearly the structure vector $\boldsymbol\lambda\in\boldsymbol\Lambda$ occurs as the structure vector of both $\mathfrak g_1,\mathfrak g_2\in\boldsymbol A$ (relative to suitable $\F$-bases of $V$) if, and only if,  the algebra structures $\mathfrak g_1$ and $\mathfrak g_2$ are $\F$-isomorphic.
In what follows we will write $\mathfrak g_1\simeq\mathfrak g_2$ to denote that algebras $\mathfrak g_1,\mathfrak g_2\in\boldsymbol A$ are $\F$-isomorphic.

%%%%%%%%%%%%%%%%%%%%%%%%%%%%%%%%%%%
The set $\boldsymbol\Lambda\,(=\F^{n^3})$ forms an $\F$-vector space via the usual (componentwise) addition and scalar multiplication.
We will use symbol $\bf abc$ to denote the member $\boldsymbol{\lambda}\,(=(\lambda_{ijk}))$ of $\boldsymbol\Lambda$ having $\lambda _{abc}=1$ and all other $\lambda _{ijk}$ equal to $0$.
We will refer to the $\F$-basis of $\boldsymbol{\Lambda}$ consisting of the $n^3$ structure vectors of this form as the standard basis of $\boldsymbol{\Lambda}$.

We can also regard $\boldsymbol A$ as an $\F$ vector space:
For $\mathfrak{g}_1,\mathfrak{g}_2\in\boldsymbol A$ with $\mathfrak{g}_1=(V,[,]_1)$, $\mathfrak{g}_2=(V,[,]_2)$ and for $\alpha\in\F$ define $\mathfrak{g}_1+\mathfrak{g}_2=(V,[,])\in\boldsymbol A$ where
$[u,v]=[u,v]_1+[u,v]_2$ and $\alpha \mathfrak{g}_1=(V,[,]_\alpha)$ where $[u,v]_\alpha=\alpha[u,v]_1$ for all $u,v\in V$.

\medskip
For the rest of the paper it will also be convenient to fix an ordered $\F$-basis $(e_1,\ldots,e_n)$ of $V$ which we will call the standard basis of $V$.

\medskip
We can then obtain an isomorphism of $\F$-vector spaces $\Theta:\boldsymbol A\to\boldsymbol\Lambda$ where, for $\mathfrak{g}\in\boldsymbol A$ we define $\Theta(\mathfrak g)\,(\in\boldsymbol\Lambda)$ to be the structure vector of $\mathfrak g$ relative to the standard basis $(e_1,\ldots,e_n)$ of $V$.

With the help of the isomorphism $\Theta:\boldsymbol A\to\boldsymbol\Lambda$ we define the map $\Omega:\boldsymbol\Lambda\times G\to\boldsymbol\Lambda:$ $(\boldsymbol\lambda,g)\mapsto\boldsymbol\lambda g$ ($\boldsymbol\lambda\in\boldsymbol\Lambda$, $g=(g_{ij})\in G$) where
$\boldsymbol\lambda g\in\boldsymbol\Lambda$ is the structure vector of $\Theta^{{}^{-1}}(\boldsymbol\lambda)\in\boldsymbol A$
relative to the $\F$-basis $(v_1,\ldots,v_n)$ of $V$ given by $v_j=\sum_{i=1}^ng_{ij}e_i$, for $1\le j\le n$.
(In particular, $g\in G$ is the transition matrix from the basis $(e_i)_{i=1}^n$ to the basis $(v_i)_{i=1}^n$ of $V$.)

It is easy to observe that the map $\Omega$ defines a linear right action of $G$ on $\boldsymbol\Lambda$ and that the resulting orbits of this action correspond precisely to the isomorphism classes of $n$-dimensional $\F$-algebras.
Also note that the map $\Omega$ gives $\boldsymbol\Lambda$ the structure of a right $\F G$-module.
%We will be considering $\boldsymbol\Lambda$ as a right $\F G$-module via the above linear action of $G$ on $\boldsymbol\Lambda$.
The orbit of $\boldsymbol\lambda\in\boldsymbol\Lambda$ with respect to the above $G$-action will be denoted by $O(\boldsymbol\lambda)$, (so $O(\boldsymbol\lambda)=\boldsymbol\lambda G$).

\medskip
Next, we recall briefly some basic facts on algebraic sets.

Let $\F[{\bf X}]$ be the ring $\F[X_{ijk}:\ 1\le i,j,k\le n]$ of polynomials in the indeterminates $X_{ijk}$ ($1\le i,j,k\le n$) over $\F$.
For each $\boldsymbol\lambda=(\lambda_{ijk})\in\boldsymbol\Lambda$ we can define the evaluation map $\ev_{\boldsymbol\lambda}:\F[{\bf X}]\to \F$
to be the unique ring homomorphism $\F[{\bf X}]\to \F$ such that $X_{ijk}\mapsto \lambda_{ijk}$ for $1\le i,j,k\le n$
and which is the identity on $\F$.
A subset $W$ of $\boldsymbol\Lambda$ is algebraic (and thus closed in the Zariski topology on $\boldsymbol\Lambda$) if there exists a subset
$S\subseteq\F[{\bf X}]$ such that
$W=\{\boldsymbol\lambda=(\lambda_{ijk})\in\boldsymbol\Lambda:$ $\ev_{\boldsymbol\lambda}(f)=0$ for all $f\in S\}$.
The Zariski closure of a subset $Y$ of $\boldsymbol\Lambda$ will be denoted by $\overline{Y}$.
By a closed subset of $\boldsymbol\Lambda$ we will always mean a Zariski-closed subset of $\boldsymbol\Lambda$.
Finally, for $U\subseteq\boldsymbol\Lambda$, the vanishing ideal ${\bf I}(U)$ of $U$ is defined by
${\bf I}(U)=\{f\in\F[{\bf X}]:\ \ev_{\boldsymbol\lambda}(f)=0$ for all $\boldsymbol\lambda\in{U}\}$.

\begin{definition}\label{DefDegeneration}
Let $\mathfrak{g}_1, \mathfrak{g}_2\in\boldsymbol A$.
We say that $\mathfrak{g}$ degenerates to $\mathfrak{h}$ (respectively, $\mathfrak{g}$ properly degenerates to $\mathfrak{h}$) if $\Theta(\mathfrak h) \in\overline{O(\Theta(\mathfrak g) )}$
(respectively, $\Theta(\mathfrak h) \in\overline{O(\Theta(\mathfrak g) )}- O(\Theta(\mathfrak g) )$).
[Note that, for $\boldsymbol\lambda,\boldsymbol\mu\in\boldsymbol\Lambda$, we have that $O(\boldsymbol\mu)\subseteq\overline{O(\boldsymbol\lambda)}$ whenever $\mu\in\overline{O(\boldsymbol\lambda)}$, see~\cite{IvanovaPallikaros2019}.]
We  write $\boldsymbol\lambda\to\boldsymbol\mu$ (and $\Theta^{-1}(\boldsymbol\lambda)\to\Theta^{-1}(\boldsymbol\mu)$) if for $\boldsymbol\lambda,\boldsymbol\mu\in\boldsymbol\Lambda$ we have that $\boldsymbol\mu\in\overline{O(\boldsymbol\lambda)}$.
Similarly, we write $\boldsymbol\lambda\not\to\boldsymbol\mu$ (and $\Theta^{-1}(\boldsymbol\lambda)\not\to\Theta^{-1}(\boldsymbol\mu)$) if $\boldsymbol\mu\not\in\overline{O(\boldsymbol\lambda)}$.
\end{definition}
A well-known result is that  $\mathfrak g$ degenerates to the Abelian algebra (the zero algebra) for all $\mathfrak g\in\boldsymbol A$.
In this paper we will consider (and also compare) degenerations within certain classes of algebras.

\medskip
For the rest of this section we prove some preliminary lemmas and discuss certain of their applications which involve techniques that will be used in Section~\ref{SectionDegen3dim} where  the main results of the paper are proved.

\begin{lemma}\label{LemmaA}
Let $f\colon\F\to\boldsymbol\Lambda$ be a continuous function in the Zariski topology.
Also let $U$ be a finite subset of $\F$ and let $S=\F-U$.
Then $f(u)\in \overline{f(S)}$ for all $u\in U$.
\end{lemma}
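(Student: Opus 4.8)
The plan is to reduce everything to one structural fact about the Zariski topology on $\F$ itself (viewed as the affine line $\F^1$): since $\F$ is infinite, its proper closed subsets are precisely the finite subsets. Indeed, a nonzero polynomial in a single variable has only finitely many roots, so the common zero set of any family of polynomials in $\F[X]$ is either all of $\F$ (when every member is the zero polynomial) or finite (as soon as one member is nonzero). Hence the only closed subset of $\F$ that contains an infinite set is $\F$ itself. I would isolate this as the single observation that drives the whole argument.

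With this in hand the proof is immediate. I would form the Zariski closure $\overline{f(S)}$ inside $\boldsymbol\Lambda$, which is closed by definition. Since $f$ is continuous, the preimage $f^{-1}\bigl(\overline{f(S)}\bigr)$ is a closed subset of $\F$, and it contains $S$ because $f(s)\in f(S)\subseteq\overline{f(S)}$ for every $s\in S$. Now $S=\F-U$ is cofinite, hence infinite (as $\F$ is infinite while $U$ is finite), so by the observation above the closed set $f^{-1}\bigl(\overline{f(S)}\bigr)$ must equal all of $\F$. Therefore $f(\F)\subseteq\overline{f(S)}$; in particular $f(u)\in\overline{f(S)}$ for every $u\in U$ (indeed for every $u\in\F$), which is exactly the assertion.

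A reformulation closer to the vanishing-ideal language of the excerpt, which I would mention as an alternative, takes an arbitrary $g\in{\bf I}(f(S))$ and considers the composite $h\colon\F\to\F$ given by $t\mapsto\ev_{f(t)}(g)$. Since $g$ determines a polynomial, hence Zariski-continuous, function $\boldsymbol\Lambda\to\F$ and $f$ is continuous, $h$ is continuous; as $h$ vanishes on the infinite set $S$, its (closed) zero set is all of $\F$, forcing $h(u)=\ev_{f(u)}(g)=0$. As $g\in{\bf I}(f(S))$ was arbitrary, this again gives $f(u)\in\overline{f(S)}$.

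I do not expect a genuine obstacle here: the only point requiring care is the characterization of the closed subsets of $\F$, and this is precisely where the standing hypothesis that $\F$ is infinite is indispensable (were $\F$ finite, $S$ could be finite and the conclusion would break down). The continuity of $f$ together with the elementary fact that preimages of closed sets under continuous maps are closed handles the remainder.
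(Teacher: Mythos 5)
Your proof is correct and follows essentially the same route as the paper: both arguments reduce to showing that the cofinite set $S$ is Zariski-dense in $\F$ and then transporting this through the continuity of $f$, your preimage formulation $f^{-1}\bigl(\overline{f(S)}\bigr)=\F$ being equivalent to the paper's use of $f(\overline S)\subseteq\overline{f(S)}$. The only difference is in how density is justified --- you argue directly from the root-counting classification of closed subsets of the affine line (proper closed sets are finite when $\F$ is infinite), whereas the paper deduces $\overline S=\F$ by contradiction from the irreducibility of~$\F$.
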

\begin{proof}
The hypothesis that $f$ is continuous ensures that $f(\bar S)\subseteq \overline{f(S)}$.
Hence, it suffices to show that $\bar S=\F$.
Suppose, on the contrary, that $\bar S\subsetneqq\F$.
Then $\bar S=\F-U'$ where $\varnothing\subsetneqq U'\subseteq U$ which, in turn, gives $\F=\bar S\cup U'$.
This is a contradiction as $\bar S$ and $U'$ are both closed subsets of $\F$ with $\varnothing\subsetneqq \bar S\subsetneqq \F$ and $\varnothing\subsetneqq U'\subsetneqq \F$ and $\F$ is irreducible (see, for example,~\cite[Example~1.1.13]{Geck2003}).
We conclude that $\bar S=\F$.
\end{proof}

\begin{example}\label{ExamplApplA}
(An application of Lemma~\ref{LemmaA}.)
Let $n=3$ and suppose ${\rm char}\,\F\ne2$.
Let $\boldsymbol\lambda={\bf221}+{\bf331}+2({\bf321})\in\boldsymbol\Lambda$ and, for each $t\in\F-\{0\}$, let
$g(t)=${\footnotesize$
\begin{pmatrix}
-t&0&0\\
0&0&-1\\
0&t&1
\end{pmatrix}$}$\in{\mathop{{\rm GL}(3,\F)}\nolimits}$.
Then $\boldsymbol\lambda g(t)={\bf231}-{\bf321}-t({\bf221})$ with $t\ne0$.
Moreover, the map $f\colon\F\to\boldsymbol\Lambda\,(=\F^{27}):$ $t\mapsto{\bf231}-{\bf321}-t({\bf221})$, ($t\in\F$) is continuous in the Zariski topology.
Set $S=\F-\{0\}$.
Clearly $f(S)\subseteq\boldsymbol\lambda G$.
Invoking Lemma~\ref{LemmaA} we get that ${\bf231}-{\bf321}\,(=f(0))\in\overline{f(S)}\subseteq\overline{\boldsymbol\lambda G}$.
\end{example}

\begin{lemma}\label{LemmaB}(Compare with~\cite[Proposition~1.7]{GrunewaldOHalloran1988a}).
%(See comments 02/09/2018)
Let $P$ be a parabolic subgroup of $G$ with $\F$ algebraically closed.
Also let $\boldsymbol\lambda \in\boldsymbol\Lambda$.
Then $(\overline{\boldsymbol\lambda P})G=\overline{\boldsymbol\lambda G}\,(=\overline{O(\boldsymbol\lambda)})$.
\end{lemma}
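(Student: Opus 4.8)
The plan is to establish the two inclusions $(\overline{\boldsymbol\lambda P})G\subseteq\overline{\boldsymbol\lambda G}$ and $\overline{\boldsymbol\lambda G}\subseteq(\overline{\boldsymbol\lambda P})G$ separately, with the bulk of the work going into the second. For the first inclusion I would note that $\boldsymbol\lambda P\subseteq\boldsymbol\lambda G=O(\boldsymbol\lambda)$, so $\overline{\boldsymbol\lambda P}\subseteq\overline{\boldsymbol\lambda G}$; since $\boldsymbol\lambda G$ is $G$-stable and each $g\in G$ acts on $\boldsymbol\Lambda$ as a homeomorphism (the action being by invertible linear maps), its closure $\overline{\boldsymbol\lambda G}$ is again $G$-stable, whence $(\overline{\boldsymbol\lambda P})G\subseteq(\overline{\boldsymbol\lambda G})G=\overline{\boldsymbol\lambda G}$. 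For the reverse inclusion it is enough to prove that $(\overline{\boldsymbol\lambda P})G$ is closed: indeed, $\boldsymbol\lambda\in\overline{\boldsymbol\lambda P}$ gives $\boldsymbol\lambda G\subseteq(\overline{\boldsymbol\lambda P})G$, and once the latter set is known to be closed it must contain $\overline{\boldsymbol\lambda G}$.

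So the crux is the closedness of $(\overline{\boldsymbol\lambda P})G$, and here the hypothesis that $P$ is parabolic (with $\F$ algebraically closed) enters through the completeness of the homogeneous space $P\backslash G$, which is a projective variety. The idea is to realise $(\overline{\boldsymbol\lambda P})G$ as the image of a closed set under a projection away from this complete variety. First I would record two facts about $Y:=\overline{\boldsymbol\lambda P}$: it is closed by construction, and it is $P$-stable, since $\boldsymbol\lambda P$ is a $P$-orbit and the closure of a $P$-stable set is $P$-stable. Next, inside $(P\backslash G)\times\boldsymbol\Lambda$ I would consider the set
\[
Z=\{(Pg,\nu)\in(P\backslash G)\times\boldsymbol\Lambda:\ \nu g^{-1}\in Y\}.
\]
The $P$-stability of $Y$ under the right action guarantees that the defining condition is independent of the chosen representative $g$ of the coset $Pg$ (replacing $g$ by $pg$ changes $\nu g^{-1}$ to $\nu g^{-1}p^{-1}$, which lies in $Y$ iff $\nu g^{-1}$ does), so $Z$ is well defined.

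To see that $Z$ is closed I would use local sections: cover $P\backslash G$ by open sets $U_i$ admitting morphisms $\sigma_i\colon U_i\to G$ with $\sigma_i(x)$ a representative of $x$, so that on $U_i\times\boldsymbol\Lambda$ the assignment $(x,\nu)\mapsto\nu\,\sigma_i(x)^{-1}$ is a morphism whose preimage of the closed set $Y$ is exactly $Z\cap(U_i\times\boldsymbol\Lambda)$; since closedness is local, $Z$ is closed. Finally, because $P\backslash G$ is complete, the second projection $\mathrm{pr}_2\colon(P\backslash G)\times\boldsymbol\Lambda\to\boldsymbol\Lambda$ is a closed morphism, so $\mathrm{pr}_2(Z)$ is closed; and by construction $\mathrm{pr}_2(Z)=\bigcup_{g\in G}Yg=(\overline{\boldsymbol\lambda P})G$, which is the closedness we need. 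I expect the main obstacle to be the technical verification that $Z$ is well defined and closed, i.e. handling the quotient $P\backslash G$ correctly via local sections, whereas the conceptual heart of the argument is simply the completeness of $P\backslash G$ that makes the projection $\mathrm{pr}_2$ closed.
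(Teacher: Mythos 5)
Your proposal is correct and follows essentially the same route as the paper: both reduce the lemma to the two inclusions, with the easy one handled by $G$-stability of $\overline{\boldsymbol\lambda G}$ and the substantive one by showing that $(\overline{\boldsymbol\lambda P})G$ is closed, given that $\overline{\boldsymbol\lambda P}$ is closed and $P$-stable. The only difference is that the paper simply cites this closedness fact (\cite[Corollary~3.2.12(a)]{Geck2003}), whereas you prove it from scratch via the completeness of $P\backslash G$ and the closed projection $\mathrm{pr}_2$ --- which is exactly the standard proof of the cited result, so your argument is a self-contained unfolding of the paper's.
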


\begin{proof}
Assume the hypothesis.
Clearly $\boldsymbol\lambda P\subseteq \boldsymbol\lambda G\subseteq \overline{\boldsymbol\lambda G}$ so $\overline{\boldsymbol\lambda P}\subseteq \overline{\boldsymbol\lambda G}$.
It follows that $(\overline{\boldsymbol\lambda P})G\subseteq \overline{\boldsymbol\lambda G}$ since $\overline{\boldsymbol\lambda G}$ is a union of $G$-orbits by~\cite[Prorosition~2.5.2(a)]{Geck2003}.
Next, we let $C=\overline{\boldsymbol\lambda P}$.
Then $C$ is closed and it is also $P$-invariant, since $C$ ia a union of $P$-orbits again by~\cite[Prorosition~2.5.2(a)]{Geck2003}.
Clearly, $\boldsymbol\lambda G\subseteq(\overline{\boldsymbol\lambda P})G=CG$.
From~\cite[Corollary~3.2.12(a)]{Geck2003}, $CG\,(\subseteq\boldsymbol\Lambda)$ is a closed set.
Hence, $\overline{\boldsymbol\lambda G}\subseteq CG=(\overline{\boldsymbol\lambda P})G$. We conclude that $(\overline{\boldsymbol\lambda P})G=\overline{\boldsymbol\lambda G}$.
\end{proof}

\begin{remark}\label{RemarkBorelSubgroup}
Let $\F$ be algebraically closed.
Also let $B$ be a Borel subgroup of $G$.

(i) The conclusion of Lemma~\ref{LemmaB} still holds with $B$ in the place of $P$ since every Borel subgroup of $G$ is parabolic by Borel's theorem,
see for example,~\cite[Theorem~3.4.3]{Geck2003}.

(ii) Suppose $U$ is a subset of $\boldsymbol\Lambda$ which is also a union of $G$-orbits.
If $\boldsymbol\lambda\in\boldsymbol\Lambda$, we then have that $U\cap\overline{O(\boldsymbol\lambda)}=\varnothing$ whenever $U\cap\overline{\boldsymbol\lambda B}=\varnothing$.
To see this, suppose that $U\cap\overline{O(\boldsymbol\lambda)}\ne\varnothing$ and that $\boldsymbol\nu\in U\cap\overline{O(\boldsymbol\lambda)}$.
Since $\boldsymbol\nu\in \overline{O(\boldsymbol\lambda)}$, Lemma~\ref{LemmaB} ensures that $\boldsymbol\nu\in O(\boldsymbol\mu)$ for some $\boldsymbol\mu\in\overline{\boldsymbol\lambda B}$.
Hence $\boldsymbol\mu\in O(\boldsymbol\nu)$.
Since $U$ is a union of $G$-orbits and $\boldsymbol\nu\in U$, we have $O(\boldsymbol\nu)\subseteq U$ and hence $\boldsymbol\mu\in U$.
This leads to $U\cap \overline{\boldsymbol\lambda B}\ne\varnothing$, since $\boldsymbol\mu\in U\cap \overline{\boldsymbol\lambda B}$.
\end{remark}

\begin{example}\label{ExamplApplB}
(An application of Lemma~\ref{LemmaB}.)

Let $n=3$ and let $\F=\mathbb C$.
Also let $\beta\in\mathbb F-\{-1\}$ and set $\boldsymbol\lambda={\bf231}+\beta({\bf321})\in\boldsymbol\Lambda$.
We aim to show that $\overline{O(\boldsymbol\lambda)}\cap\mathcal K_3=\{{\bf0}\}$, where $\mathcal K_3\,(\subseteq\boldsymbol\Lambda)$ is given by $\Theta^{-1}(\mathcal K_3)=\{\mathfrak g=(V,[,])\in\boldsymbol A\colon [u,u]=0_V\ \mbox{for all\ } u\in V\}$.
In order to establish this we will use Lemma~\ref{LemmaB}.

Let $b\in B$ where $B$ is the Borel subgroup of all upper triangular matrices in $G=\GlnF$, so $b=(b_{ij})$ where $b_{ij}=0$ whenever $i>j$, and $b_{11}b_{22}b_{33}\ne0$.
Then $\boldsymbol\lambda b=\big(\frac{b_{22}b_{33}}{b_{11}}\big){\bf231}+\beta\big(\frac{b_{22}b_{33}}{b_{11}}\big){\bf321}+(1+\beta)\big(\frac{b_{23}b_{33}}{b_{11}}\big){\bf331}$.
Our first goal is to show that $\overline{\boldsymbol\lambda B}\cap\mathcal K_3=\{{\bf0}\}$.

It follows easily from the expression for $\boldsymbol\lambda b$ obtained above that the following polynomials all belong to ${\bf I}(\boldsymbol\lambda B)$, the vanishing ideal of the $B$-orbit $\boldsymbol\lambda B:$\\
$X_{321}-\beta X_{231}$, $X_{232}$, $X_{233}$, $X_{322}$, $X_{323}$ and $X_{11i}$, $X_{12i}$, $X_{13i}$, $X_{21i}$, $X_{22i}$, $X_{31i}$ for $1\le i\le 3$.

Now let $\boldsymbol\mu=(\mu_{ijk})\in(\overline{\boldsymbol\lambda B})\cap\mathcal K_3$.
Since $\boldsymbol\mu\in\mathcal K_3$, we must have $\mu_{iik}=0$ for $1\le i,k\le 3$.
Moreover, in view of the fact that in $\Theta^{-1}(\boldsymbol\mu)$ we have $[e_2+e_3,e_2+e_3]=0_V$, we also get that $\mu_{321}+\mu_{231}=0$.
Since $\boldsymbol\mu\in\overline{\boldsymbol\lambda B}$ and $X_{321}-\beta X_{231}\in{\bf I}(\boldsymbol\lambda B)$ we must also have that $\mu_{321}-\beta\mu_{231}=0$.
This gives $(1+\beta)\mu_{321}=0$ with $\beta\ne-1$, so $\mu_{321}=0=\mu_{231}$.
Finally, invoking the fact that $\ev_{\boldsymbol\mu}(f)=0$ for each of the remaining polynomials $f$ in ${\bf I}(\boldsymbol\lambda B)$ listed above we conclude that $\boldsymbol\mu={\bf0}$.
Hence $\overline{\boldsymbol\lambda B}\cap\mathcal K_3=\{{\bf0}\}$.
Remark~\ref{RemarkBorelSubgroup}(ii) with $U=\mathcal K_3-\{{\bf0}\}$ now ensures that $U\cap\overline{O(\boldsymbol\lambda)}=\varnothing$ and hence $\overline{O(\boldsymbol\lambda)}\cap\mathcal K_3=\{{\bf0}\}$ as required.
This observation will turn out to be useful in Section~\ref{SectionDegen3dim}.
\end{example}

\begin{remark}\label{Remark2.7}
(i) An argument involving the $B$-orbit may not only be used to rule out the possibility of degeneration as in Example~\ref{ExamplApplB}, but it may also be used in order to construct degenerations:
Keeping the hypothesis and notation of Example~\ref{ExamplApplB}, define $b(t)\in B$ for each $t\in\F-\{0\}$ by setting $b_{23}=1$ and $b_{22}=b_{33}=b_{11}=t$ (arbitrary values can be assigned to $b_{12}$ and $b_{13}$).
We then have that  $\boldsymbol\lambda b(t)=t\,{\bf231}+(\beta t)\,{\bf321}+(1+\beta)\,{\bf331}$, $t\in\F-\{0\}$.
Let $f\colon\F\to \boldsymbol\Lambda\colon$ $t\mapsto t\,{\bf231}+(\beta t)\,{\bf321}+(1+\beta)\,{\bf331}$, ($t\in\F$).
Also let $S=\F-\{0\}$.
Then $f$ is a continuous map in the Zariski topology satisfying $f(S)\subseteq O(\lambda)$.
Invoking Lemma~\ref{LemmaA} we get that $(1+\beta){\bf331}\,(=f(0))\in\overline{f(S)}\subseteq\overline{O(\boldsymbol\lambda)}$.
Our hypothesis that $1+\beta\ne0$ now ensures that ${\bf331}\in \overline{O(\boldsymbol\lambda)}$.

(ii) The arguments in Example~\ref{ExamplApplA} and in item~(i) of this remark  also provide a method of constructing 1-parameter contractions, if one is working in the metric topology with $\F=\mathbb C$ or $\mathbb R$, by letting $t\to0$.
\end{remark}

\begin{lemma}\label{LemmaC}
Let $1<r<n$ and let $\boldsymbol\lambda\,(=(\lambda_{ijk}))\in\boldsymbol\Lambda$ satisfy $\lambda_{ijk}=0$ whenever $1\le i,j\le r$ and $r+1\le k\le n$.
Define $\boldsymbol\mu\,(=(\mu_{ijk}))\in\boldsymbol\Lambda$ by
\[
\mu_{ijk}=\left\{
\begin{array}{ll}
\lambda_{ijk},& \mbox{if\ } (1\le i,j\le r \mbox{\ and\ } 1\le k\le n) \mbox{\ or\ } (1\le i\le r \mbox{\ and\ } r+1\le j,k\le n)\\ & \mbox{\qquad\qquad or\ } (1\le j\le r \mbox{\ and\ } r+1\le i,k\le n),\\
0,& \mbox{otherwise}.
\end{array}
\right.
\]
Then $\boldsymbol\mu\in\overline{O(\boldsymbol\lambda)}$.
\end{lemma}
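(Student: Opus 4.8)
The plan is to realise $\boldsymbol\mu$ as the limit, along a suitable one-parameter subgroup, of points of the orbit $O(\boldsymbol\lambda)$, and then to conclude by Lemma~\ref{LemmaA}. For each $t\in\F-\{0\}$ I would take $g(t)\in G$ to be the diagonal matrix $\mathrm{diag}(d_1,\dots,d_n)$ with $d_i=1$ for $1\le i\le r$ and $d_i=t$ for $r+1\le i\le n$; this lies in $\GlnF$ since $\det g(t)=t^{\,n-r}\ne0$. Directly from the definition of the action (here the new basis is simply $v_j=d_je_j$, so $[v_i,v_j]=d_id_j\sum_k\lambda_{ijk}e_k=\sum_k d_id_jd_k^{-1}\lambda_{ijk}\,v_k$) one checks, for any diagonal $g=\mathrm{diag}(d_1,\dots,d_n)$, that $(\boldsymbol\lambda g)_{ijk}=d_id_jd_k^{-1}\lambda_{ijk}$.

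For the specific $g(t)$ above, the scalar $d_id_jd_k^{-1}$ equals a power $t^{\,e(i,j,k)}$, where the exponent $e(i,j,k)$ counts how many of $i,j$ exceed $r$ and subtracts $1$ when $k>r$. The only way to produce a negative exponent is to have $1\le i,j\le r$ together with $k>r$; but this is precisely the range on which the hypothesis forces $\lambda_{ijk}=0$. Consequently every component $t\mapsto(\boldsymbol\lambda g(t))_{ijk}$ is an honest polynomial in $t$ (either $\lambda_{ijk}t^{\,e(i,j,k)}$ with $e(i,j,k)\ge0$, or the zero polynomial), so $t\mapsto\boldsymbol\lambda g(t)$ extends to a map $f\colon\F\to\boldsymbol\Lambda$ that is continuous in the Zariski topology. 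I then read off $f(0)$ componentwise: $e(i,j,k)=0$ exactly on the three index-ranges listed in the definition of $\boldsymbol\mu$, so there the component survives in the limit as $\lambda_{ijk}=\mu_{ijk}$, while on every remaining range $e(i,j,k)>0$ and the component tends to $0=\mu_{ijk}$. Hence $f(0)=\boldsymbol\mu$.

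To finish, set $S=\F-\{0\}$ and note $f(S)\subseteq\boldsymbol\lambda G=O(\boldsymbol\lambda)$. Applying Lemma~\ref{LemmaA} with $U=\{0\}$ then yields $\boldsymbol\mu=f(0)\in\overline{f(S)}\subseteq\overline{O(\boldsymbol\lambda)}$, as required. The one point that genuinely requires care—and the only place where the hypothesis on $\boldsymbol\lambda$ is used—is verifying that no negative power of $t$ survives: this is exactly what guarantees that $f$ is a genuine morphism $\F\to\boldsymbol\Lambda$ rather than merely a rational map defined away from $0$, and hence that Lemma~\ref{LemmaA} applies. The remaining work is the routine bookkeeping matching the four ``vanishing'' index-ranges and the three ``surviving'' ones against the definition of $\boldsymbol\mu$.
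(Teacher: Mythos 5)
Your proposal is correct and is essentially the paper's own proof: the paper uses exactly the same diagonal one-parameter matrices $g(t)=\mathrm{diag}(1,\dots,1,t,\dots,t)$, writes out the components of $\boldsymbol\lambda\,g(t)$ (as $\lambda_{ijk}$, $t\lambda_{ijk}$ or $t^2\lambda_{ijk}$, which is your exponent $e(i,j,k)\in\{0,1,2\}$ made explicit, with the hypothesis likewise used to kill the would-be $t^{-1}$ range), and then concludes via Lemma~\ref{LemmaA} with $S=\F-\{0\}$. Your observation that the extension of $t\mapsto\boldsymbol\lambda g(t)$ to a genuine morphism on all of $\F$ is the crux, and the only place the hypothesis on $\boldsymbol\lambda$ enters, matches the paper's argument precisely.
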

\begin{proof}
Assume the hypothesis.
For each $t\in\F-\{0\}$, let $g(t)\in G$ be the diagonal matrix having coefficient 1 in the first $r$ entries and coefficient t in the last $n-r$ entries.
Also let $\boldsymbol\nu(t)\,(=(\nu_{ijk}(t)))\in\boldsymbol\Lambda$ be defined, for each $t\in\F$, by
\[
\nu_{ijk}(t)=\left\{
\begin{array}{ll}
\lambda_{ijk},& \mbox{if\ } (1\le i,j\le r \mbox{\ and\ } 1\le k\le n) \mbox{\ or\ } (1\le i\le r \mbox{\ and\ } r+1\le j,k\le n)\\ & \mbox{\qquad\qquad or\ } (1\le j\le r \mbox{\ and\ } r+1\le i,k\le n),\\
t^2\,\lambda_{ijk},& \mbox{if\ } r+1\le i,j\le n \mbox{\ and\ } 1\le k\le r,\\
t\,\lambda_{ijk},& \mbox{otherwise}.
\end{array}
\right.
\]
Then $\boldsymbol\nu(t)=\boldsymbol\lambda\, g(t)$ for each $t\in\F-\{0\}$.
We now define $f\colon\F\to\boldsymbol\Lambda:$ $t\mapsto\boldsymbol\nu(t)$, ($t\in\F$).
Then $f$ is a continuous function in the Zariski topology.
Moreover, by setting $S=\F-\{0\}$, we see that $f(S)\subseteq O(\boldsymbol\lambda)$.
Finally, invoking Lemma~\ref{LemmaA} we get that $\boldsymbol\mu\,(=f(0))\in \overline{f(S)}\subseteq\overline{O(\boldsymbol\lambda)}$.
\end{proof}

\begin{remark}\label{Remark2.9}
Keeping the notation and hypothesis of Lemma~\ref{LemmaC}, we see that $\mathfrak b=\F$-span$(e_1,\ldots,e_r)$ is in fact a subalgebra of $\Theta^{-1}(\boldsymbol\lambda)$.
Moreover, $\Theta^{-1}(\boldsymbol\mu)$ is a `semi-direct' sum of the algebra $\mathfrak b$ with an Abelian ideal of dimension $n-r$.
\end{remark}

\begin{example}\label{ExamplApplC}
(An application of Lemma~\ref{LemmaC}.)\\
Let $n=3$ and let $\boldsymbol\lambda={\bf121}+{\bf211}+{\bf222}+{\bf323}\in\boldsymbol\Lambda$.
We aim to show that ${\bf213}\in \overline{O(\boldsymbol\lambda)}$.
It will be convenient to consider the structure vector $\boldsymbol\nu=\boldsymbol\lambda g$ where
$g=${\footnotesize$\begin{pmatrix}
1&0&0\\
0&1&0\\
1&0&-1
\end{pmatrix}$}$\in\GlF
$.
We have $\boldsymbol\nu={\bf121}+{\bf211}+{\bf213}+{\bf222}+{\bf323}\in\boldsymbol\Lambda$.
Invoking Lemma~\ref{LemmaC} with $r=1$ we get that ${\bf213}\in\overline{O(\boldsymbol\nu)}=\overline{O(\boldsymbol\lambda)}$ (since $O(\boldsymbol\nu)=O(\boldsymbol\lambda)$).
\end{example}

Finally for this section we include the following preparatory lemma which we will need in Section~\ref{SectionDegen3dim}.
\begin{lemma}\label{Lemma5.2}
Let $\kappa\in\mathbb C-\{-2,2\}$ and let $\boldsymbol\lambda={\bf221}+{\bf331}+\kappa({\bf321})\in\boldsymbol\Lambda$.
Also let $\alpha\in\mathbb C$  be a root of the polynomial $x^2+\kappa x+1\in\mathbb C[x]$.
Then:

(i) $\alpha^2\ne1$, (ii) $\kappa+2\alpha\ne0$, (iii) $\frac{\alpha(\kappa\alpha+2)}{\kappa+2\alpha}=-\alpha^2$, and (iv) ${\bf231}+\,-\alpha^2({\bf321})\in O(\boldsymbol\lambda)$.
\end{lemma}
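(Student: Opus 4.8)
\section*{Proof proposal}

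The plan is to dispatch parts (i)--(iii) by direct manipulation of the defining relation $\alpha^2+\kappa\alpha+1=0$ (which in particular forces $\alpha\ne0$), and to establish the substantive part (iv) by exhibiting an explicit $g\in\GlC$ with $\boldsymbol\lambda g={\bf231}-\alpha^2({\bf321})$. For (i) I would argue contrapositively: if $\alpha^2=1$ then $\alpha=\pm1$, and substituting into $x^2+\kappa x+1$ forces $\kappa=-2$ (when $\alpha=1$) or $\kappa=2$ (when $\alpha=-1$), both excluded by hypothesis. For (ii), assuming $\kappa+2\alpha=0$, i.e.\ $\kappa=-2\alpha$, and feeding this into $\alpha^2+\kappa\alpha+1=0$ yields $1-\alpha^2=0$, contradicting (i); hence $\kappa+2\alpha\ne0$ and the fraction in (iii) is well defined. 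For (iii), since the denominator is nonzero by (ii), it suffices to clear denominators and verify $\alpha(\kappa\alpha+2)=-\alpha^2(\kappa+2\alpha)$; expanding both sides and cancelling reduces this to $\alpha(\alpha^2+\kappa\alpha+1)=0$, which holds.

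For (iv) I would first record that $\mathfrak g=\Theta^{-1}(\boldsymbol\lambda)$ has all products landing in $\mathbb C e_1$: writing $u=\sum_i u_i e_i$ and $w=\sum_i w_i e_i$, one has $[u,w]_{\mathfrak g}=(u_2w_2+u_3w_3+\kappa\,u_3w_2)\,e_1$. I would then pass to the ordered basis $(v_1,v_2,v_3)$ with $v_1=e_1$, $v_2=s(\alpha e_2+e_3)$ and $v_3=e_2+\alpha e_3$, for a scalar $s$ to be chosen; the matrix $g$ whose columns are the coordinate vectors of the $v_j$ then satisfies $\boldsymbol\lambda g=$ the structure vector of $\mathfrak g$ in this basis. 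Because $v_1=e_1$ and every product lies in $\mathbb C e_1=\mathbb C v_1$, all structure constants $\mu_{jk\ell}$ with $\ell\ne1$ vanish automatically, and $[v_1,v_j]_{\mathfrak g}=[v_j,v_1]_{\mathfrak g}=0_V$ for all $j$; so only $\mu_{221},\mu_{331},\mu_{231},\mu_{321}$ can be nonzero.

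The role of $\alpha$ being a root of $x^2+\kappa x+1$ is now exactly to annihilate the two ``diagonal'' coefficients: evaluating the form gives $[v_2,v_2]_{\mathfrak g}=s^2(\alpha^2+\kappa\alpha+1)e_1=0_V$ and $[v_3,v_3]_{\mathfrak g}=(1+\alpha^2+\kappa\alpha)e_1=0_V$, so $\mu_{221}=\mu_{331}=0$. A short computation then yields $[v_2,v_3]_{\mathfrak g}=s(\kappa+2\alpha)e_1$ and $[v_3,v_2]_{\mathfrak g}=s\alpha(\kappa\alpha+2)e_1$, whence $\mu_{231}=s(\kappa+2\alpha)$ and $\mu_{321}=s\alpha(\kappa\alpha+2)$. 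Choosing $s=(\kappa+2\alpha)^{-1}$, which is legitimate by (ii), makes $\mu_{231}=1$, and then $\mu_{321}=\frac{\alpha(\kappa\alpha+2)}{\kappa+2\alpha}=-\alpha^2$ by (iii). Finally $\det g=s(\alpha^2-1)\ne0$ by (i) and (ii), so $g\in\GlC$ and $\boldsymbol\lambda g={\bf231}-\alpha^2({\bf321})\in O(\boldsymbol\lambda)$, giving (iv).

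The only genuine obstacle is discovering the correct directions for $v_2$ and $v_3$: one needs the two ``squares'' $[v_2,v_2]_{\mathfrak g}$ and $[v_3,v_3]_{\mathfrak g}$ to vanish simultaneously, which forces each of $v_2,v_3$ to point along a root-direction of the quadratic $u\mapsto u_2^2+u_3^2+\kappa\,u_2u_3$, while at the same time the cross-products $[v_2,v_3]_{\mathfrak g}$ and $[v_3,v_2]_{\mathfrak g}$ must acquire precisely the ratio $1:-\alpha^2$. That these two demands are compatible is exactly the content of identity (iii); once the ansatz $v_2\parallel\alpha e_2+e_3$, $v_3\parallel e_2+\alpha e_3$ has been guessed, the remainder is routine verification together with a single rescaling.
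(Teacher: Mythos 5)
Your proposal is correct and follows essentially the same route as the paper: parts (i)--(iii) by elementary manipulation of $\alpha^2+\kappa\alpha+1=0$, and part (iv) by an explicit change of basis to the root directions $\alpha e_2+e_3$ and $e_2+\alpha e_3$ of the quadratic form. Your matrix $g$ is the paper's matrix up to a harmless relocation of the normalizing scalar (you rescale $v_2$ by $s=(\kappa+2\alpha)^{-1}$ where the paper instead scales $v_1$ by $\kappa+2\alpha$), which yields the identical structure vector ${\bf231}-\alpha^2({\bf321})$.
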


\begin{proof}
Assume the hypothesis.

(i) Since $\alpha$ is a root of $x^2+\kappa x+1$, we get $x^2+\kappa x+1=(x-\alpha)(x-\beta)$ for some $\beta\in\mathbb C$ with $\alpha\beta=1$.
In particular $\beta$ is also a root of $x^2+\kappa x+1$ and $\alpha\ne0$, $\beta\ne0$.
Moreover, $\alpha\ne\beta$ since the roots of the polynomial $x^2+\kappa x+1$ over $\mathbb C$ are given by $\frac12(-\kappa\pm\sqrt{\kappa^2-4})$ and $\kappa^2\ne4$ by assumption.
If $\alpha^2=1$, we would get $\alpha=\alpha(\alpha\beta)=\alpha^2\beta=\beta$ which is a contradiction.
We conclude that $\alpha^2\ne1$.

(ii) From $\alpha^2+\kappa\alpha+1=0$ we get $\kappa=\alpha^{-1}(-1-\alpha^2)$, (recall that $\alpha\ne0$).
Hence $\kappa+2\alpha=\alpha^{-1}(-1-\alpha^2+2\alpha^2)=\alpha^{-1}(-1+\alpha^2)\ne0$, since $\alpha^2\ne1$ from item~(i) of this lemma.

(iii) From item (ii) we get, $\alpha(\kappa\alpha+2)=\alpha(-1-\alpha^2+2)=\alpha(1-\alpha^2)$.
Hence,
\[
\frac{\alpha(\kappa\alpha+2)}{\kappa+2\alpha}=\frac{\alpha(1-\alpha^2)}{\alpha^{-1}(-1+\alpha^2)}=-\alpha^2.
\]

(iv) Let
$g=${\footnotesize$\begin{pmatrix}
\kappa+2\alpha&0&0\\
0&\alpha&1\\
0&1&\alpha
\end{pmatrix}$}$\in M_3(\mathbb C)$.
Clearly $g\in\GlC$ since $\det(g)=(\kappa+2\alpha)(\alpha^2-1)\ne0$.
Then $\boldsymbol\lambda g={\bf231}+\frac{\alpha(\kappa\alpha+2)}{\kappa+2\alpha}({\bf321})={\bf231}+\,-\alpha^2({\bf321})$.
Hence, ${\bf231}+\,-\alpha^2({\bf321})\in O(\boldsymbol\lambda)$ as required.
\end{proof}

\section{Necessary conditions for degeneration}\label{SectionNecCond}

Necessary conditions in the study of degenerations have been used extensively by many authors.
In this section we recall certain necessary conditions for degeneration which we will need later~on.

\begin{definition}
Let $\mathfrak g=(V,\,[,])\in\boldsymbol A$.
We define the following $\F$-vector spaces:

(i) ${\rm ann}_R\,\mathfrak g=\{c\in V\colon [a,c]=0_V$ for all $a\in V\}$, (the right annihilator of $\mathfrak g$),

(ii) ${\rm ann}_L\,\mathfrak g=\{c\in V\colon [c,a]=0_V$ for all $a\in V\}$, (the left annihilator of $\mathfrak g$),

(iii) ${\rm Der}_{(\alpha,\beta,\gamma)}\mathfrak g=\{\phi\in{\rm End}_\F\,\mathfrak g\colon \alpha\phi[u,v]=\beta[\phi(u),v]+\gamma[u,\phi(v)]$ for all $u,v\in V\}$, for each ordered triple $(\alpha,\beta,\gamma)\in\F^3$, and

(iv) ${\rm Der}\mathfrak g={\rm Der}_{(1,1,1)}\mathfrak g$, (the algebra of derivations of $\mathfrak g$).
\end{definition}

In the following proposition we collect some well-known facts regarding degenerations.
We supply the proofs for completeness.

\begin{proposition}
Let $\mathfrak g, \mathfrak h\in\boldsymbol A$ and suppose $\mathfrak g$ degenerates to $\mathfrak h$.
Then

(i) $\dim_{\F}{\rm ann}_R\,\mathfrak g\le\dim_{\F}{\rm ann}_R\,\mathfrak h$.

(ii) $\dim_{\F}{\rm ann}_L\,\mathfrak g\le\dim_{\F}{\rm ann}_L\,\mathfrak h$.

(iii) $\dim_{\F}{\rm Der}_{(\alpha,\beta,\gamma)}\mathfrak g\le\dim_{\F}{\rm Der}_{(\alpha,\beta,\gamma)}\mathfrak h$.

(iv) If $\F=\mathbb C$ and $\mathfrak g$ properly degenerates to $\mathfrak h$, then $\dim_{\F}{\rm Der}\,\mathfrak g<\dim_{\F}{\rm Der}\,\mathfrak h$.

\end{proposition}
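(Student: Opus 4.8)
The plan is to prove parts (i)--(iii) by a single, uniform semicontinuity argument and to treat (iv) separately via orbit dimensions. The common principle is this: each of the three dimensions is the nullity of an $\F$-linear map whose matrix entries depend linearly on the structure vector. For $\boldsymbol\nu=(\nu_{ijk})\in\boldsymbol\Lambda$ write $\mathfrak a=\Theta^{-1}(\boldsymbol\nu)$ and consider the $\F$-linear map $R_{\boldsymbol\nu}\colon V\to\mathrm{End}_\F V$ sending $c\mapsto(a\mapsto[a,c]_{\mathfrak a})$; then $\ker R_{\boldsymbol\nu}={\rm ann}_R\,\mathfrak a$, and relative to the standard basis the matrix entries of $R_{\boldsymbol\nu}$ are among the $\nu_{ijk}$, so they depend linearly on $\boldsymbol\nu$. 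Since $\dim_\F\ker R_{\boldsymbol\nu}=n-\mathrm{rank}\,R_{\boldsymbol\nu}$ and the locus $\{\boldsymbol\nu:\mathrm{rank}\,R_{\boldsymbol\nu}\ge k\}$ is Zariski-open (it is where some $k\times k$ minor, a polynomial in the $\nu_{ijk}$, fails to vanish), the function $\boldsymbol\nu\mapsto\dim_\F\ker R_{\boldsymbol\nu}$ is upper semicontinuous; hence every super-level set $\{\boldsymbol\nu:\dim_\F\ker R_{\boldsymbol\nu}\ge m\}$ is closed. I would also record that $\dim_\F{\rm ann}_R$ is an isomorphism invariant, hence constant on each $G$-orbit.

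For (i), set $d=\dim_\F{\rm ann}_R\,\mathfrak g$ and $\boldsymbol\lambda=\Theta(\mathfrak g)$. By the invariance just noted, the whole orbit $O(\boldsymbol\lambda)$ lies in the closed set $\{\boldsymbol\nu:\dim_\F\ker R_{\boldsymbol\nu}\ge d\}$, which therefore contains $\overline{O(\boldsymbol\lambda)}$ and in particular the point $\boldsymbol\mu=\Theta(\mathfrak h)\in\overline{O(\boldsymbol\lambda)}$; this gives $\dim_\F{\rm ann}_R\,\mathfrak h=\dim_\F\ker R_{\boldsymbol\mu}\ge d$, which is (i). Part (ii) is identical after replacing $R_{\boldsymbol\nu}$ by $L_{\boldsymbol\nu}\colon c\mapsto(a\mapsto[c,a]_{\mathfrak a})$. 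For (iii) I would instead realize ${\rm Der}_{(\alpha,\beta,\gamma)}\mathfrak a$ as the kernel of the $\F$-linear map $D_{\boldsymbol\nu}\colon\mathrm{End}_\F V\to\boldsymbol A$ sending $\phi\mapsto\big((u,v)\mapsto\alpha\,\phi[u,v]_{\mathfrak a}-\beta[\phi(u),v]_{\mathfrak a}-\gamma[u,\phi(v)]_{\mathfrak a}\big)$; since the bracket $[,]_{\mathfrak a}$ is bilinear in $\boldsymbol\nu$ and $\phi$ enters linearly, the matrix entries of $D_{\boldsymbol\nu}$ are again linear in the $\nu_{ijk}$ (with $\alpha,\beta,\gamma$ absorbed as constants). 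The identical semicontinuity argument then applies, now with $n^2-\mathrm{rank}$ in place of $n-\mathrm{rank}$, using that $\dim_\F{\rm Der}_{(\alpha,\beta,\gamma)}$ is an isomorphism invariant.

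Part (iv) is the genuinely different case and the main obstacle, and it is here that the hypothesis $\F=\mathbb C$ is essential. The idea is to convert the strict inequality into a strict drop of orbit dimensions. The stabilizer ${\rm Stab}_G(\boldsymbol\lambda)=\{g\in G:\boldsymbol\lambda g=\boldsymbol\lambda\}$ is precisely the automorphism group ${\rm Aut}(\mathfrak g)$ realized inside $G$, an algebraic subgroup whose Lie algebra (tangent space at the identity) is ${\rm Der}\,\mathfrak g={\rm Der}_{(1,1,1)}\mathfrak g$. Because $\mathbb C$ has characteristic $0$ this group is smooth, so $\dim{\rm Stab}_G(\boldsymbol\lambda)=\dim_{\mathbb C}{\rm Der}\,\mathfrak g$, and orbit--stabilizer gives $\dim O(\boldsymbol\lambda)=n^2-\dim_{\mathbb C}{\rm Der}\,\mathfrak g$, likewise for $\mathfrak h$. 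It then remains to establish the strict inequality $\dim O(\boldsymbol\mu)<\dim O(\boldsymbol\lambda)$: I would invoke the standard fact that an orbit is open in its closure and that the boundary $\overline{O(\boldsymbol\lambda)}\setminus O(\boldsymbol\lambda)$ is a closed union of orbits of strictly smaller dimension. A proper degeneration means $\boldsymbol\mu\in\overline{O(\boldsymbol\lambda)}\setminus O(\boldsymbol\lambda)$, and since distinct orbits are disjoint, the whole of $O(\boldsymbol\mu)$ lies in this boundary, forcing $\dim O(\boldsymbol\mu)<\dim O(\boldsymbol\lambda)$. Substituting the orbit-dimension formula yields $\dim_{\mathbb C}{\rm Der}\,\mathfrak g<\dim_{\mathbb C}{\rm Der}\,\mathfrak h$. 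The delicate points needing care are the identification of the Lie algebra of the stabilizer with ${\rm Der}\,\mathfrak g$ and the smoothness input that makes $\dim{\rm Stab}=\dim{\rm Der}$ (both relying on characteristic $0$), together with the strict dimension drop on the orbit boundary.
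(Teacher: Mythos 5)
Your proposal is correct and takes essentially the same route as the paper: for (i)--(iii) the paper uses exactly your semicontinuity argument, encoding the right annihilator, left annihilator and $(\alpha,\beta,\gamma)$-derivations as kernels of matrices $\tilde c(\boldsymbol\lambda)$, $\tilde a(\boldsymbol\lambda)$, $\tilde d(\boldsymbol\lambda)$ whose entries are linear in the structure constants, then combining constancy of rank on orbits with the closedness of the rank-$\le t$ loci. For (iv) the paper simply cites the literature (Geck; Humphreys; Onishchik--Vinberg) for precisely the standard facts you spell out: the stabilizer of $\Theta(\mathfrak g)$ is $\mathrm{Aut}(\mathfrak g)$, whose Lie algebra is $\mathrm{Der}\,\mathfrak g$ in characteristic $0$ (giving $\dim O(\Theta(\mathfrak g))=n^2-\dim_{\mathbb C}\mathrm{Der}\,\mathfrak g$), together with the strict dimension drop on boundary orbits.
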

\begin{proof}
(i) For each $\boldsymbol\lambda=(\lambda_{ijk})\in\boldsymbol\Lambda$ let $\tilde c(\boldsymbol\lambda)\in\F^{n\times n^2}$ be the $(n\times n^2)$-matrix over $\F$ whose columns are precisely all the vectors of the form $(\lambda_{i1j}, \lambda_{i2j},\ldots, \lambda_{inj})^{tr}$ for $1\le i,j\le n$ (in some fixed order).
Also define $\psi(\boldsymbol\lambda)\colon \F^{1\times n}\to\F^{n\times n^2}:$ $\hat{\alpha}\mapsto\hat\alpha\tilde c(\boldsymbol\lambda)$, $\hat\alpha\in\F^{1\times n}$ (the space of $(1\times n)$-matrices over $\F$).
Then $\psi(\boldsymbol\lambda)$ is an $\F$-linear map and it is easy to see that for $\alpha_1, \ldots, \alpha_n\in\F$ we have $\alpha_1e_1+\ldots+\alpha_ne_n\in{\rm ann}_R\Theta^{-1}(\boldsymbol\lambda)$ if, and only if, $(\alpha_1,\ldots,\alpha_n)\in\ker\psi(\boldsymbol\lambda)$.
Hence, $\dim_\F({\rm ann}_R\Theta^{-1}(\boldsymbol\lambda))=\dim_{\F}\ker\psi(\boldsymbol\lambda)=n-\dim_\F{\rm im}\psi(\boldsymbol\lambda)=n-{\rm rank}\,\tilde c(\boldsymbol\lambda)$.
It follows that ${\rm rank}\,\tilde c(\boldsymbol\lambda)={\rm rank}\,\tilde c(\boldsymbol\mu)$ whenever $\boldsymbol\mu\in O(\boldsymbol\lambda)$ since the algebras $\Theta^{-1}(\boldsymbol\lambda)$ and $\Theta^{-1}(\boldsymbol\mu)$ are isomorphic (so $\dim_\F{\rm ann}_R\Theta^{-1}(\boldsymbol\lambda)=\dim_\F{\rm ann}_R\Theta^{-1}(\boldsymbol\mu)$).
We thus have that for each non-negative integer $t$ the set $\{\boldsymbol\nu\in\boldsymbol \Lambda\colon {\rm rank}\,\tilde c(\boldsymbol\nu)\le t\}$ is a Zariski-closed set in $\boldsymbol\Lambda$ which is also a union of orbits (compare, for example, with~\cite[Remark~3.15]{IvanovaPallikaros2019}).
We conclude that ${\rm rank}\,\tilde c(\boldsymbol\nu)\le {\rm rank}\,\tilde c(\boldsymbol\lambda)$ whenever $\boldsymbol\nu\in\overline{O(\boldsymbol\lambda)}$.
This leads to $\dim_\F{\rm ann}_R\Theta^{-1}(\boldsymbol\nu)\ge\dim_\F{\rm ann}_R\Theta^{-1}(\boldsymbol\lambda)$ whenever $\boldsymbol\nu\in\overline{O(\boldsymbol\lambda)}$ as required.

(ii) We can use same argument as in item (i) of this proposition, but in the place of the matrix $\tilde c(\boldsymbol\lambda)$ we now consider matrix $\tilde a(\boldsymbol\lambda)\in\F^{n\times n^2}$ where the columns of $\tilde a(\boldsymbol\lambda)$ are precisely the vectors of the form $(\lambda_{1ij},\lambda_{2ij},\ldots,\lambda_{nij})^{tr}$ for $1\le i,j\le n$.
(See also the proof of~\cite[Lemma~3.16]{IvanovaPallikaros2019}.)

(iii) Let $\boldsymbol\lambda\,(=(\lambda_{ijk}))\in\boldsymbol\Lambda$ and let $\phi\in{\rm End}_{\F}V$.
Also let $\alpha$, $\beta$ and $\gamma$ be (not necessarily distinct) elements of $\F$.
For $1\le r,s\le n$ we denote by $\phi_{(r,s)}\,(\in\F)$ the coefficient of $e_r$ when we express $\phi(e_s)$ as an $\F$-linear combination of the elements of the standard basis $(e_1,\ldots,e_n)$ of $V$.
Suppose further that the algebra $\Theta^{-1}(\boldsymbol\lambda)$ has multiplication given by $[,]$.
Recall that in Section~\ref{SectionPrelim} we have imposed an ordering on the $n^3$ ordered triples $(i,j,k)$, $1\le i,j,k\le n$.
For the discussion that follows it will also be convenient to impose an ordering on the $n^2$ ordered pairs $(r,s)$, $1\le r,s\le n$.
Via this ordering we can consider the row-matrix $\hat\phi\in\F^{1\times n^2}$ whose coefficients are the $\phi_{(r,s)}$ for $1\le r,s\le n$.
The map $\phi\mapsto\hat\phi$ from ${\rm End}_{\F}V$ to $\F^{1\times n^2}$ is then an isomorphism of $\F$-spaces.

By expanding (for $1\le i,j\le n$) the expression $\alpha\phi([e_i,e_j])-\beta[\phi(e_i),e_j]-\gamma[e_i,\phi(e_j)]\,(\in V)$ as a linear combination of the elements of the standard basis $(e_1,\ldots,e_n)$ of $V$, we can see that the coefficient of $e_k$ ($1\le k\le n$) in this expression is given by $\sum_{r,s=1}^n\phi_{(r,s)}\delta_{(r,s)}^{(i,j,k)}\,(\in\F)$ for some coefficients $\delta_{(r,s)}^{(i,j,k)}\in\F$ which depend on $i,j,k,r$ and $s$.
The coefficients $\delta_{(r,s)}^{(i,j,k)}$ are in fact expressions of the form $\sum\xi_{abc}\lambda_{abc}$ with the only allowed values of the elements $\xi_{abc}$ of $\F$ coming from the list $0$, $\alpha$, $\beta$, $\gamma$.

[For example, when $n=3$, we get $\delta_{(1,1)}^{(1,2,1)}=\alpha\lambda_{121}-\beta\lambda_{121}$ and $\delta_{(1,2)}^{(1,2,1)}=\alpha\lambda_{122}-\gamma\lambda_{111}$.]

For each $\boldsymbol\lambda\in\boldsymbol\Lambda$ we can now define matrix $\tilde d(\boldsymbol\lambda)\in\F^{n^2\times n^3}$ as the matrix having the $n^5$ elements $\delta_{(r,s)}^{(i,j,k)}$ as its coefficients: the column-index (resp., row-index) of the coefficient $\delta_{(r,s)}^{(i,j,k)}$ in the matrix $\tilde d(\boldsymbol\lambda)$ is given by the ordering we have fixed on the triples $(i,j,k)$ (resp., the pairs $(r,s)$).
In particular, the coefficients in the $(i,j,k)$-column of $\tilde d(\boldsymbol\lambda)$ are the $\delta_{(r,s)}^{(i,j,k)}$ for $1\le r,s\le n$.

It is then easy to observe that $\phi\in{\rm Der}_{(\alpha,\beta,\gamma)}\Theta^{-1}(\boldsymbol\lambda)$ if, and only if, $\hat\phi\, \tilde d(\boldsymbol\lambda)={\bf0}_{1\times n^3}$, the $(1\times n^3)$ zero matrix.
It follows that $\dim{\rm Der}_{(\alpha,\beta,\gamma)}\Theta^{-1}(\boldsymbol\lambda)=n^2-{\rm rank}\, \tilde d(\boldsymbol\lambda)$ (since this equals to the dimension of the kernel of the linear map $\hat u\mapsto \hat u\, \tilde d(\boldsymbol\lambda)$ from $\F^{1\times n^2}$ to $\F^{1\times n^3}$). %, $\hat u\in\F^{1\times n^2}$
Comparing, for example, with~\cite[Result~3.13 and Remark~3.15]{IvanovaPallikaros2019} we see that the set $\{\boldsymbol\mu\in\boldsymbol\Lambda\colon {\rm rank}\,\tilde d(\boldsymbol\mu)\le {\rm rank}\,\tilde d(\boldsymbol\lambda)\}$ is Zariski-closed.
Now ${\rm rank}\,\tilde d(\boldsymbol\lambda)={\rm rank}\,\tilde d(\boldsymbol\lambda')$ whenever $\boldsymbol\lambda'\in O(\boldsymbol\lambda)$, since $\Theta^{-1}(\boldsymbol\lambda)\simeq\Theta^{-1}(\boldsymbol\lambda')$.
Hence, $O(\boldsymbol\lambda)\subseteq\{\boldsymbol\mu\in\boldsymbol\Lambda\colon {\rm rank}\,\tilde d(\boldsymbol\mu)\le {\rm rank}\,\tilde d(\boldsymbol\lambda)\}$.
It follows that $\dim{\rm Der}_{(\alpha,\beta,\gamma)}\Theta^{-1}(\boldsymbol\mu)\ge\dim{\rm Der}_{(\alpha,\beta,\gamma)}\Theta^{-1}(\boldsymbol\lambda)$ whenever $\boldsymbol\mu\in\overline{O(\boldsymbol\lambda)}$.

Item (iv) follows, for example, by combining~\cite[Propositions~1.5.2 and~2.5.3]{Geck2003} and~\cite[Proposition~p.60 and Corollary~p.88]{Humphrays1991}.
See also~\cite[Example~2, p.~23]{OnishchikVinberg90}.
\end{proof}

Clearly, if for some $\boldsymbol\lambda\in\boldsymbol\Lambda$ we have  $O(\boldsymbol\lambda)\subseteq S$, with $S$ an algebraic subset of $\boldsymbol\Lambda$, then $\boldsymbol\mu\in S$ for all $\boldsymbol\mu\in\overline{O(\boldsymbol\lambda)}$.
In the next section we recall from~\cite{IvanovaPallikaros2019}, \cite{PallikarosWard2020} some particular algebraic subsets of $\boldsymbol\Lambda$ which played a key role in those papers and which will also play some part in the present paper.

\section{Certain algebraic subsets of $\boldsymbol\Lambda$ and the maps $\phi_1$, $\phi_2$}\label{SectionPhi1Phi2}

Following the notation in~\cite{PallikarosWard2020} we define the subsets $\mathcal K$, $\mathcal C$, $\mathcal M^*$ and $\mathcal M^{**}$ of~$\boldsymbol\Lambda$ by
\begin{gather*}
\Theta^{-1}(\mathcal K)=\{\mathfrak g=(V,[,])\in\boldsymbol A\colon [u,u]=0_V\ \mbox{for all\ } u\in V\},\\
\Theta^{-1}(\mathcal C)=\{\mathfrak g=(V,[,])\in\boldsymbol A\colon [u,v]=[v,u]\ \mbox{for all\ } u,v\in V\},\\
\Theta^{-1}(\mathcal M^*)=\{\mathfrak g=(V,[,])\in\boldsymbol A\colon [u,v]\in\F\mbox{-span}(u,v)\ \mbox{for all\ } u,v\in V\}, \ \mbox{and}\\
\Theta^{-1}(\mathcal M^{**})=\{\mathfrak g=(V,[,])\in\boldsymbol A\colon [u,u]\in\F\mbox{-span}(u)\ \mbox{for all\ } u\in V\}.
\end{gather*}
Then (see~\cite[Sections~3.1, 5.1 (Eq. (7)) and~6.1 (Eq. (12))]{PallikarosWard2020}) we have
\begin{gather*}
\mathcal K=\{\boldsymbol\lambda=(\lambda_{ijk})\in\boldsymbol\Lambda\colon \lambda_{iii}=0,\ \lambda_{iij}=0,\ \lambda_{ijk}+\lambda_{jik}=0\ \mbox{and\ } \lambda_{iji}+\lambda_{jii}=0\},\\
\mathcal C=\{\boldsymbol\lambda=(\lambda_{ijk})\in\boldsymbol\Lambda\colon \lambda_{ijj}=\lambda_{jij}\ \mbox{and\ } \lambda_{ijk}=\lambda_{jik}\},\\
\mathcal M^*=\{\boldsymbol\lambda=(\lambda_{ijk})\in\boldsymbol\Lambda\colon \lambda_{iij}=0,\ \lambda_{ijk}=0,\ \lambda_{ijj}=\lambda_{ikk},\ \lambda_{jij}=\lambda_{kik}\ \mbox{and}\\
\phantom{\mathcal M^*=\{}{} \lambda_{iii}=\lambda_{ijj}+\lambda_{jij}\}, \ \mbox{and}\\
\mathcal M^{**}=\{\boldsymbol\lambda=(\lambda_{ijk})\in\boldsymbol\Lambda\colon \lambda_{ijk}+\lambda_{jik}=0,\ \lambda_{iij}=0\ \mbox{and\ } \lambda_{iii}=\lambda_{ijj}+\lambda_{jij}\},
\end{gather*}
where the following convention is in force for the description of the last four sets:
Different letters in the subscripts for the components of a structure vector represent different numerical values, but all such choices of subscripts are allowed.
In particular, $\mathcal K$, $\mathcal C$, $\mathcal M^*$ and $\mathcal M^{**}$ are Zariski-closed subsets of $\boldsymbol\Lambda$.
Moreover, these sets are all unions of orbits with respect to the action of $G$ on $\boldsymbol\Lambda$ we are considering.

We also define the subsets $\mathcal B$ and $\mathcal T$ of $\boldsymbol\Lambda$ by
\begin{gather*}
\Theta^{-1}(\mathcal B)=\{\mathfrak g=(V,[,])\in\boldsymbol A\colon [[u,v],w]=0_V\ \mbox{for all\ } u,v,w\in V\}, \ \mbox{and}\\
\Theta^{-1}(\mathcal T)=\{\mathfrak g=(V,[,])\in\boldsymbol A\colon {\rm trace}\,{\rm ad}_u=0\ \mbox{for all\ } u\in V\},
\end{gather*}
where ${\rm ad}_u\colon V\to V:$ $v\mapsto [u,v]$, ($v\in V$) is the adjoint map.
Then, (see for example~\cite[Remark~2.7 and Remark~4.12]{IvanovaPallikaros2019}), we have that $\mathcal B=\{\boldsymbol\lambda=(\lambda_{ijk})\in\boldsymbol\Lambda\colon \sum_{l=1}^n\lambda_{ijl}\lambda_{lkm}=0$ for $1\le i,j,k,m\le n\}$ and $\mathcal T=\{\boldsymbol\lambda=(\lambda_{ijk})\in\boldsymbol\Lambda\colon \sum_{j=1}^n\lambda_{ijj}=0$ for $1\le i\le n\}$.
In particular, both $\mathcal B$ and $\mathcal T$ are algebraic subsets of $\boldsymbol\Lambda$ which are also unions of orbits.

Finally, we introduce the subsets $\mathcal A^s$, $\mathcal L$ and $\mathcal J$ of $\boldsymbol\Lambda$ by
\begin{gather*}
\Theta^{-1}(\mathcal A^s)=\{\mathfrak g=(V,[,])\in\boldsymbol A\colon [[u,v],w]=[u,[v,w]]\ \mbox{for all\ } u,v,w\in V\},\\
\Theta^{-1}(\mathcal L)=\{\mathfrak g=(V,[,])\in\Theta^{-1}(\mathcal K)\colon [[u,v],w]+[[v,w],u]+[w,u],v]=0_V\ \mbox{for all\ } u,v,w\in V\}, \\
\Theta^{-1}(\mathcal J)=\{\mathfrak g=(V,[,])\in\Theta^{-1}(\mathcal C)\colon [[[u,u],v],u]=[[u,u],[v,u]]\ \mbox{for all\ } u,v\in V\}
\end{gather*}
which correspond respectively to the sets of Associative, Lie and Jordan algebra structures in~$\boldsymbol A$.

The sets $\mathcal A^s$, $\mathcal L$ and $\mathcal J$ of $\boldsymbol A$ are also algebraic and consist of unions of orbits.
For defining conditions of these sets via polynomial equations see for example~\cite[Proposition~1, p.4]{Jacobson1962} for the sets $\mathcal A^s$ and $\mathcal L$, and~\cite[Section~2]{KashubaShestakov2007} for~$\mathcal J$.

These last three sets will be of central importance in this paper as our goal is to determine the degeneration picture in $\mathcal A^s$ for the special case $n=3$ and $\F=\mathbb C$ via the degeneration pictures in $\mathcal L$ and $\mathcal J$.
In order to achieve this we will need first to introduce certain maps $\phi_1$, $\phi_2$ satisfying $\phi_1(\mathcal A^s)\subseteq\mathcal L$ and $\phi_2(\mathcal A^s)\subseteq\mathcal J$.
For this we need ${\rm char}\,\F\ne2$ and we therefore make this assumption for the rest of this section.

%\section{The maps $\phi$, $\phi_1$ and $\phi_2$}
\bigskip

For an algebra $\mathfrak g=(V,\,[,])\in\boldsymbol A$, the opposite algebra $\widetilde{\mathfrak{g}}$ has product $\widetilde{[\,,\,]}$
defined by $\widetilde{[u,v]}=[v,u]$ for all $u,v\in V$.
If $\Theta(\mathfrak{g})=\boldsymbol{%
\lambda}$ with $\boldsymbol{\lambda}=(\lambda_{ijk})$, we will write $\Theta(\widetilde{%
\mathfrak{g}})=\widetilde{\boldsymbol{\lambda}}$ with $\widetilde{\boldsymbol{\lambda%
}}=(\widetilde{\lambda}_{ijk})$. Clearly $(\widetilde{\tilde{\boldsymbol{\lambda}}})=%
\boldsymbol{\lambda}$ and $\widetilde\lambda_{ijk}=\lambda_{jik}$ for all $i,j,k$.
In~\cite[Lemma~3.2]{PallikarosWard2020} it is shown that  $(\widetilde{\boldsymbol{\lambda}})g=(\widetilde{\boldsymbol{%
\lambda }g})$ for all $\boldsymbol{\lambda}\in\boldsymbol{\Lambda}$ and for
all $g\in G$.

We now define the maps $\phi$, $\phi_1$ and $\phi_2\colon\boldsymbol\Lambda\to\boldsymbol\Lambda$, respectively, by $\boldsymbol\lambda\mapsto\tilde{\boldsymbol\lambda}$, $\boldsymbol\lambda\mapsto\frac12(\boldsymbol\lambda-\tilde{\boldsymbol\lambda})$ and $\boldsymbol\lambda\mapsto\frac12(\boldsymbol\lambda+\tilde{\boldsymbol\lambda})$ for $\boldsymbol\lambda\in \boldsymbol\Lambda$.
%Then $\phi$ is a $G$-module homomorphism (see \cite{PallikarosWard2020}).
Clearly the maps $\phi$, $\phi_1$ and $\phi_2$ are regular and hence continuous in the Zariski topology.
Moreover, we have $\phi(\mathcal A^s)=\mathcal A^s$, $\phi(\mathcal L)=\mathcal L$, $\phi(\mathcal J)=\mathcal J$, $\phi_1(\mathcal A^s)\subseteq\mathcal L$ and $\phi_2(\mathcal A^s)\subseteq\mathcal J$.

We collect some further observations regarding the maps $\phi$, $\phi_1$ and $\phi_2$ in the following remark.

\begin{remark}\label{RemarkObservations}
Let $\psi$ be any of the maps $\phi$, $\phi_1$ or $\phi_2$.
Also let $\boldsymbol\lambda\in\boldsymbol\Lambda$.

(i) It follows from the fact that $(\widetilde{\boldsymbol{\lambda}})g=(\widetilde{\boldsymbol{\lambda }g})$ for any $g\in G$, and the linearity of the $G$-action we are considering, that $\psi$ is an $\F G$-module homomorphism from $\boldsymbol\Lambda$ to $\boldsymbol\Lambda$, see~\cite[Lemma~3.2]{PallikarosWard2020}.
In particular, $\psi(O(\boldsymbol\lambda))=\{\psi(\boldsymbol\lambda g)\colon g\in G\}=\{\psi(\boldsymbol\lambda)g\colon g\in G\}=O(\psi(\boldsymbol\lambda))$.

(ii) If $S$ is subset of $\boldsymbol\Lambda$ which is a union of $G$-orbits, then the subset $\psi^{-1}(S)$ of $\boldsymbol\Lambda$ is also a union of $G$-orbits since for $g\in G$ and $\boldsymbol\nu\in\psi^{-1}(S)$ we have $\psi(\boldsymbol\nu g)=\psi(\boldsymbol\nu)g\in S$.

(iii) From the continuity of $\psi$ we get $\psi(\overline{O(\boldsymbol\lambda)})\subseteq\overline{O(\psi(\boldsymbol\lambda))}\,(=\overline{\psi(O(\boldsymbol\lambda))})$.

(iv) Since $\phi$ is invertible with $\phi^{-1}=\phi$, we must have $\phi\,(\overline{O(\boldsymbol\lambda)})=\overline{O(\phi(\boldsymbol\lambda))}\,( =\overline{ O(\tilde{\boldsymbol\lambda} ) } )$.

(v)  If $\F$ is algebraically closed and $\tilde{\boldsymbol\lambda}\not\in{O(\boldsymbol\lambda)}$, then $\tilde{\boldsymbol\lambda}\not\in\overline{O(\boldsymbol\lambda)}$ (and $\boldsymbol\lambda\not\in\overline{O(\tilde{\boldsymbol\lambda})})$.
To see this, note that if $\tilde{\boldsymbol\lambda}\in\overline{O(\boldsymbol\lambda)}-O(\boldsymbol\lambda)$, then $\boldsymbol\lambda\,(=\phi(\tilde{\boldsymbol\lambda}))\in\phi(\overline{O(\boldsymbol\lambda)})=\overline{O(\tilde{\boldsymbol\lambda})}$, which is impossible as orbits are open in their closure --- see for example~\cite[Proposition 2.5.2(a)]{Geck2003}.
\end{remark}

\section{%Three-dimensional complex associative algebras
Three-dimensional complex Associative, Lie and Jordan\\ algebras}\label{Section3dimAlg}

We fix $\F=\mathbb C$ and $n=3$ for the rest of the paper.
We also denote by $\mathbb K$ the subset\\ $\{x+yi\colon x,y\in\mathbb R$ with $x>0$ or ($x=0$ and $y>0)\}$ of $\mathbb C$.

{\begin{center}
\footnotesize
\refstepcounter{tbn}\label{Table3dimAssocClass}

\begin{longtable}{|c|p{11cm}|l|l|}
\hline
$\mathfrak a$ & Non-zero commutation relations relative to the standard basis of $V$ & $\phi_1$ & $\phi_2$\\
\hline
$\mathfrak a_0$ & --- &$\mathfrak a_0$ & $\mathfrak a_0$\\
\hline
$\mathfrak l_1$  & $e_2e_3=-e_3e_2=e_1$ & $\mathfrak l_1$ & $\mathfrak a_0$ \\
\hline
\hline
$\mathfrak c_{1}$  & $e_3e_3=e_1$ & $\mathfrak a_0$ & $\mathfrak c_{1}$\\
\hline
$\mathfrak c_{2}$ & $e_1e_1=e_1$, $e_1e_2=e_2e_1=e_2$, $e_1e_3=e_3e_1=e_3$  & $\mathfrak a_0$ & $\mathfrak c_{2}$ \\
\hline
$\mathfrak c_{3}$  & $e_2e_2=e_1$, $e_3e_3=e_1$  & $\mathfrak a_0$ & $\mathfrak c_{3}$\\
\hline
$\mathfrak c_{4}$  & $e_1e_1=e_1$  & $\mathfrak a_0$ & $\mathfrak c_{4}$\\
\hline
$\mathfrak c_{5}$  & $e_1e_1=e_2$, $e_1e_2=e_2e_1=e_3$  & $\mathfrak a_0$ & $\mathfrak c_{5}$ \\
\hline
$\mathfrak c_{6}$ & $e_1e_1=e_1$, $e_1e_2=e_2e_1=e_2$, $e_1e_3=e_3e_1=e_3$, $e_2e_2=e_3$  & $\mathfrak a_0$ & $\mathfrak c_{6}$ \\
\hline
$\mathfrak c_{7}$   & $e_1e_1=e_1$, $e_1e_2=e_2e_1=e_2$  & $\mathfrak a_0$ & $\mathfrak c_{7}$\\
\hline
$\mathfrak c_{8}$   & $e_1e_1=e_1$, $e_2e_2=e_3$  & $\mathfrak a_0$ & $\mathfrak c_{8}$ \\
\hline
$\mathfrak c_{9}$  & $e_1e_1=e_1$, $e_2e_2=e_2$, $e_1e_3=e_3e_1=e_3$  & $\mathfrak a_0$ & $\mathfrak c_{9}$ \\
\hline
$\mathfrak c_{10}$  & $e_1e_1=e_1$, $e_2e_2=e_2$  & $\mathfrak a_0$ & $\mathfrak c_{10}$ \\
\hline
$\mathfrak c_{11}$  & $e_1e_1=e_1$, $e_2e_2=e_2$, $e_3e_3=e_3$  & $\mathfrak a_0$ & $\mathfrak c_{11}$\\
\hline
\hline
$\mathfrak a_{1}$  & $e_1e_1=e_1$, $e_1e_2=e_2e_1=e_2$, $e_1e_3=e_3e_1=e_3$, $e_2e_3=-e_3e_2=e_2$, $e_3^2=e_1$  & $\mathfrak m_4$& $J_2$ \\
\hline
$\mathfrak a_{2}$  & $e_2e_3=e_1$  & $\mathfrak l_1$ & $\mathfrak c_{3}$\\
\hline
$\mathfrak a_{3}(\kappa)$ & $e_2e_2=e_1$, $e_3e_2=\kappa e_1$, $e_3e_3=e_1$, {\tiny where $\kappa\in\mathbb K$ (different values of $\kappa\in\mathbb K$ correspond to non-isomorphic algebras)}  & $\mathfrak l_1$ & $\mathfrak c_{3}$ \\
\hline
$\mathfrak a_{4}$  & $e_3e_1=e_1$, $e_3e_2=e_2$, $e_3e_3=e_3$  & $\mathfrak m_2$ & $J_5$ \\
\hline
$\mathfrak a_{5}$  & $e_1e_3=e_1$, $e_2e_3=e_2$, $e_3e_3=e_3$  & $\mathfrak m_2$ & $J_5$ \\
\hline
$\mathfrak a_{6}$  & $e_2e_1=e_1$, $e_2e_2=e_2$, $e_3e_2=e_3$  & $\mathfrak m_3$ & $J_5$ \\
\hline
$\mathfrak a_{7}$ & $e_1e_2=e_2e_1=e_1$, $e_2e_2=e_2$, $e_3e_2=e_3$  & $\mathfrak m_4$ & $J_4$ \\
\hline
$\mathfrak a_{8}$ & $e_1e_2=e_2e_1=e_1$, $e_2e_2=e_2$, $e_2e_3=e_3$  & $\mathfrak m_4$ & $J_4$ \\
\hline
$\mathfrak a_{9}$ & $e_3e_2=e_2$, $e_3e_3=e_3$  & $\mathfrak m_4$ & $J_8$ \\
\hline
$\mathfrak a_{10}$ & $e_2e_3=e_2$, $e_3e_3=e_3$  & $\mathfrak m_4$ & $J_8$ \\
\hline
$\mathfrak a_{11}$  & $e_1e_1=e_1$, $e_3e_2=e_2$, $e_3e_3=e_3$  & $\mathfrak m_4$  & $J_3$ \\
\hline
$\mathfrak a_{12}$  & $e_1e_1=e_1$, $e_2e_3=e_2$, $e_3e_3=e_3$  & $\mathfrak m_4$  & $J_3$ \\
\hline
\end{longtable}

Table~\ref{Table3dimAssocClass}. Non-isomorphic 3-dimensional complex associative algebras.

\end{center}}

{\bf Comments on Table~\ref{Table3dimAssocClass}:}

(i) In the first column of Table~\ref{Table3dimAssocClass} we list a complete set of non-isomorphic 3-dimensional complex associative algebras based on the classification obtained in~\cite{KobayashiShirayanagiTsukada2021}.

\medskip
(ii) Entry $\mathfrak g$ (with $\mathfrak g\in\boldsymbol A$) in the column headed $\phi_1$ (resp., $\phi_2$) and in the row corresponding to algebra $\mathfrak a\in\Theta^{-1}(\mathcal A^s)$ means that $\phi_1(\Theta(\mathfrak a))\in O(\Theta(\mathfrak g))$ (resp., $\phi_2(\Theta(\mathfrak a))\in O(\Theta(\mathfrak g))$).

\medskip
(iii) A complete list of non-isomorphic algebras $\mathfrak g\in\boldsymbol A$ with $\Theta(\mathfrak g)$ lying in the set $\phi_1(\mathcal A^s)$ consists of the two associative Lie algebras (the Abelian algebra $\mathfrak a_0$ and the Heisenberg algebra $\mathfrak l_1$) and the following three non-associative Lie algebras (only non-zero commutation relations are listed):

$\mathfrak m_2$: $e_1e_3=-e_3e_1=e_1$, $e_2e_3=-e_3e_2=e_2$

$\mathfrak m_3$: $e_1e_3=-e_3e_1=e_1$, $e_2e_3=-e_3e_2=-e_2$

$\mathfrak m_4$: $e_1e_2=-e_2e_1=e_1$

\medskip

In Picture~\ref{Picture3DimLieChild} below we include all possible degenerations inside the set $\phi_1(\mathcal A^s)\,(\subseteq \mathcal L)$ as these were determined in~\cite{Agaoka1999,NesterenkoPopovych2006}.
In particular, the results in~\cite{Agaoka1999,NesterenkoPopovych2006} show that the set $\phi_1(\mathcal A^s)$ is an algebraic subset of $\boldsymbol\Lambda$.
We remark here that although the arguments in~\cite{Agaoka1999,NesterenkoPopovych2006} are made with respect to the metric topology (as the authors are interested in contractions of Lie algebras), they carry out easily to arguments in the Zariski topology (compare with Example~\ref{ExamplApplA} and Remark~\ref{Remark2.7}(ii) of the present paper).
In fact, it was shown in~\cite{GrunewaldOHalloran1988a} that the closures in the Zariski topology and in the standard topology of the orbit of a point of an affine variety over~$\mathbb C$ under the action of an algebraic group coincide.

\begin{center}
\begin{tikzpicture}
\matrix [column sep=7mm, row sep=5mm] {
& \node (g34)  {$\mathfrak m_3$}; &  &  \node (g21g1)  {$\mathfrak m_4$}; %%%%% \node (A7) [draw, shape=circle] {$\mathcal A_7$};
 \\
\node (g33)  {$\mathfrak m_2$}; &  &\node (l1)  {$\mathfrak l_1$}; &
   \\
 & \node (a0)  {$\mathfrak a_{0}$}; &  &
 \\
};
\draw[->, thin] (g34) -- (l1);  %thick
\draw[->, thin] (g21g1) -- (l1);
\draw[->, thin] (g33) -- (a0);
\draw[->, thin] (l1) -- (a0);
\end{tikzpicture}
\\
{\footnotesize
\refstepcounter{pict}\label{Picture3DimLieChild}
Picture~\ref{Picture3DimLieChild}
}
\end{center}

\medskip
(iv) A complete list of non-isomorphic algebras $\mathfrak g\in\boldsymbol A$ with $\Theta(\mathfrak g)\in\phi_2(\mathcal A^s)$ consists of the associative commutative algebras $\mathfrak a_0$ and $\mathfrak c_i$ ($1\le i\le 11$) together with the following non-associative Jordan algebras (only non-zero commutation relations are listed):

$J_2$: $e_1e_1=e_1$, $e_2e_2=e_2$, $e_1e_3=e_3e_1=\frac12e_3$, $e_2e_3=e_3e_2=\frac12e_3$

$J_3$: $e_1e_1=e_1$, $e_2e_2=e_2$, $e_1e_3=e_3e_1=\frac12e_3$

$J_4$: $e_1e_1=e_1$, $e_1e_2=e_2e_1=\frac12e_2$, $e_1e_3=e_3e_1=e_3$

$J_5$: $e_1e_1=e_1$, $e_1e_2=e_2e_1=\frac12e_2$, $e_1e_3=e_3e_1=\frac12e_3$

$J_8$: $e_1e_1=e_1$, $e_1e_3=e_3e_1=\frac12e_3$.

\begin{center}
\begin{tikzpicture}
\matrix [column sep=7mm, row sep=5mm] {
 & & & \node (c11) { $\mathfrak c_{11}$}; & & \\
 & & \node (c9) {$\mathfrak c_{9}$}; & & \node (c10) {$\mathfrak c_{10}$}; & \\
\node (j2) { $J_2$}; & \node (c6) {$\mathfrak c_{6}$}; & \node (c7) {$\mathfrak c_{7}$}; & & \node (j3) {$ J_3$}; & \node (c8) {$\mathfrak c_{8}$}; \\
& & \node (j8)  {$J_8$}; & \node (j4) {$ J_4$}; & \node (c5) {$\mathfrak c_{5}$}; & \\
& \node (c2) {$\mathfrak c_{2}$}; & & \node (c3) {$\mathfrak c_{3}$}; & & \node (c4) {$\mathfrak c_{4}$};\\
& & & \node (c1) {$\mathfrak c_{1}$}; & & \node (j5) {$ J_5$}; \\
& & & \node (c0) {$\mathfrak a_{0}$}; & & \\
};
\draw[->, thin] (c11) -- (c9);
\draw[->, thin] (c11) -- (c10);
\draw[->, thin] (c9) -- (c6);
\draw[->, thin] (c9) -- (c7);
\draw[->, thin] (c9) -- (c8);
\draw[->, thin] (c10) -- (c7);
\draw[->, thin] (c10) -- (c8);
\draw[->, thin] (j2) -- (j8);
\draw[->, thin] (c6) -- (c5);
\draw[->, thin] (j3) -- (j8);
\draw[->, thin] (j3) -- (j4);
\draw[->, thin] (j3) -- (c4);
\draw[->, thin] (c7) -- (c5);
\draw[->, thin] (c8) -- (c5);
\draw[->,  thin] (j2) -- (c2);
\draw[->,  thin] (c6) -- (c2);
\draw[->, thin] (j8) -- (c3);
\draw[->,  thin] (c5) -- (c3);
\draw[->,  thin] (j4) -- (c3);
\draw[->,  thin] (c8) -- (c4);
\draw[->,  thin] (c2) -- (c1);
\draw[->,  thin] (c3) -- (c1);
\draw[->,  thin] (c4) -- (c1);
\draw[->,  thin] (c1) -- (c0);
\draw[->,  thin] (j5) -- (c0);
\end{tikzpicture}

{\footnotesize
\refstepcounter{pict}\label{Picture3DimJordChild}
Picture~\ref{Picture3DimJordChild}
}
\end{center}

In Picture~\ref{Picture3DimJordChild}  we include all possible degenerations inside the set $\phi_2(\mathcal A^s)\,(\subseteq \mathcal J)$ as these were determined in~\cite{KashubaShestakov2007,GorshkovKaygorodovPopov2021}.
In particular, the results of those papers show that $\phi_2(\mathcal A^s)$ is an algebraic subset of $\mathcal J$.

\medskip
(v) In Table~\ref{TableDimmAnn} below we collect information on certain algebra invariants, computed for some of the algebras listed in Table~\ref{Table3dimAssocClass}.
%, that will be useful later on.

{\begin{center}

\refstepcounter{tbn}\label{TableDimmAnn}
\footnotesize
\begin{tabular}{c|c|c|c|c|c|c|c|c|c|c|c} %
$\mathfrak a$               & $\mathfrak a_0$  & $\mathfrak a_2$  & $\mathfrak a_3(\kappa)$  & $\mathfrak l_1$ &  $\mathfrak c_1$ &  $\mathfrak c_3$  & $\mathfrak a_7$  & $\mathfrak a_8$  & $\mathfrak a_9$  & $\mathfrak a_{10}$ & $\mathfrak a_{11}$\\
\hline
$\dim{\rm ann}_L\mathfrak a$ & 3 & 2 & 1&1 &2 & 1 & 0&1 &2 &1 &1\\
\hline
$\dim{\rm ann}_R\mathfrak a$ & 3 & 2 & 1&1 &2 & 1 & 1 & 0 &1 & 2 &0\\
\hline
$\dim{\rm Der}$ & 9 & 4 &4 &6 &5 &4 &3 &3 &3 &3 &2\\
\hline
$\dim{\rm Der}_{(1,0,1)}$ & 9  & 5 & 3  & 3  &5  &3  &2  &5  &5  &3 &5 \\
%\hline
\end{tabular}
\\[2ex]
Table~\ref{TableDimmAnn} (with $\kappa\in\mathbb K$).\\[2ex]

\end{center}}

{\bf Notation.} At this point it will be convenient to introduce some more notation.

(i) We will say that algebras $\mathfrak g_1$, $\mathfrak g_2\in A$ form a $\{\mathfrak g,\tilde{\mathfrak g}\}$-pair if $\Theta(\tilde{\mathfrak g_2})\in O(\Theta({\mathfrak g_1}))$ but $\Theta(\mathfrak g_2)\not\in O(\Theta({\mathfrak g_1}))$.
In such a case we also have that $\Theta(\tilde{\mathfrak g_1})\in O(\Theta({\mathfrak g_2}))$ and $\Theta(\mathfrak g_1)\not\in O(\Theta({\mathfrak g_2}))$.
[Note that for $\boldsymbol\lambda,\boldsymbol\mu\in\boldsymbol\Lambda$ we have that $\phi(\boldsymbol\mu)\in O(\boldsymbol\lambda)$ if, and only if, $\boldsymbol\lambda\in O(\phi(\boldsymbol\mu))\,(=\phi(O(\boldsymbol\mu)))$, and this last statement holds if, and only if, $\phi(\boldsymbol\lambda)\in O(\boldsymbol\mu)$ since $\phi^2={\rm id}_{\boldsymbol\Lambda}$.]

(ii) For the sake of simplicity, in various occasions in the rest of the paper we will use, for $\mathfrak g\in \boldsymbol A$, symbol $\mathfrak g^o$ to mean $O(\Theta(\mathfrak g))$ --- in particular $\mathfrak g^o\subseteq\boldsymbol\Lambda$.
In this notation we have $\overline{\mathfrak a_0^o}=\mathfrak a_0^o=\{{\bf0}\}\subseteq\boldsymbol\Lambda$.

\medskip
In the following remark we collect some observations regarding the algebras in Table~\ref{Table3dimAssocClass}.

\begin{remark}\label{Remark5.1}
(i) The following is a complete list of pairs of algebras from Table~\ref{Table3dimAssocClass} that form $\{\mathfrak g,\tilde{\mathfrak g}\}$-pairs:
$
\{\mathfrak a_4,\mathfrak a_5\}$,\quad $\{\mathfrak a_7,\mathfrak a_8\}$,\quad $\{\mathfrak a_9,\mathfrak a_{10}\}$,\quad $\{\mathfrak a_{11},\mathfrak a_{12}\}.$

\medskip
(ii)
We have the following intersections of $\mathcal A^s$ with some of our familiar algebraic sets.
\begin{gather*}
\mathcal A^s\cap\mathcal K=\{{\bf0}\}\cup\mathfrak l_1^o\\
\mathcal A^s\cap\mathcal M^{**}=\{{\bf0}\}\cup\mathfrak a_4^o\cup\mathfrak a_5^o\cup\mathfrak a_6^o\cup\mathfrak l_1^o\\
\mathcal A^s\cap\mathcal M^*=\{{\bf0}\}\cup\mathfrak a_4^o\cup\mathfrak a_5^o\\
\mathcal A^s\cap\mathcal C=\{{\bf0}\}\cup(\bigcup\limits_{1\le i\le11}\mathfrak c_i^o)\\
\mathcal A^s\cap\mathcal T=\{{\bf0}\}\cup\mathfrak l_1^o\cup\mathfrak c_1^o\cup\mathfrak c_3^o\cup\mathfrak c_5^o\cup\mathfrak a_2^o\cup(\bigcup\limits_{\kappa\in\mathbb K}\mathfrak a_3(\kappa)^o)\\
\mathcal A^s\cap(\mathcal C\cap \mathcal T)=\{{\bf0}\}\cup\mathfrak c_1^o\cup\mathfrak c_3^o\cup\mathfrak c_5^o\\
\mathcal A^s\cap\mathcal B=\{{\bf0}\}\cup\mathfrak l_1^o\cup\mathfrak c_1^o\cup\mathfrak c_3^o\cup\mathfrak a_2^o\cup(\bigcup\limits_{\kappa\in\mathbb K}\mathfrak a_3(\kappa)^o)
\end{gather*}
%(Subset $\mathcal B$ of $\boldsymbol\Lambda$ corresponds via $\Theta^{-1}$ precisely to the algebras satisfying $[[x,y],z]=0$ for all $x,y,z\in V$.
%This is an algebraic set by Rapinchuk's argument --- alternatively see~\cite[Remark~2.7]{IvanovaPallikaros2019}.)
By setting $\tilde{\mathcal T}=\phi(\mathcal T)\,(=\{\boldsymbol\lambda=(\lambda_{ijk})\in\boldsymbol\Lambda\colon \sum_{j=1}^n\lambda_{jij}=0$ for all $1\le i\le n\})$,
we see that $\mathcal A^s\cap\mathcal T=\mathcal A^s\cap\tilde{\mathcal T}$ and that $\mathcal A^s\cap\mathcal B\subseteq\mathcal A^s\cap(\mathcal T\cap\tilde{\mathcal T})$.
%These observations will turn out to be useful for certain computations in Section~\ref{SectionDegen3dim}.

\medskip
(iii)
(a) $\Theta(\mathfrak l_1)\in O(\boldsymbol\eta)$ and $\Theta(\mathfrak c_1)\in O(\boldsymbol\delta)$ with $\boldsymbol\eta$ and $\boldsymbol\delta$ as in~\cite{IvanovaPallikaros2019}.
Also recall  from~\cite{IvanovaPallikaros2019} that $\overline{O(\boldsymbol\eta)}=\{{\bf0}\}\cup O(\boldsymbol\eta)$ and $\overline{O(\boldsymbol\delta)}=\{{\bf0}\}\cup O(\boldsymbol\delta)$.

(b) If $\boldsymbol\lambda\in\mathcal M^{**}-\mathcal M^*$, then $\boldsymbol\eta\in \overline{O(\boldsymbol\lambda)}$, see \cite[Lemma 4.4]{IvanovaPallikaros2019}.

(c) If $\boldsymbol\lambda\in\boldsymbol\Lambda-\mathcal M^{**}$, then $\boldsymbol\delta\in \overline{O(\boldsymbol\lambda)}$, see \cite[Lemma 5.4]{IvanovaPallikaros2019}.

\medskip
(iv) Clearly $\mathcal C\cap \mathcal A^s=\{\boldsymbol\lambda\in\mathcal A^s\colon \boldsymbol\lambda=\tilde{\boldsymbol\lambda}\}$ so for $\boldsymbol\mu\in\mathcal C\cap \mathcal A^s$ we have that $\boldsymbol\mu=\phi_2(\boldsymbol\mu)\in\mathcal J$.
Invoking the fact that $\mathcal J\subseteq\mathcal C$ we get  $\mathcal C\cap \mathcal A^s=\mathcal J\cap \mathcal A^s$.
It follows that the degeneration picture inside the algebraic subset $\mathcal C\cap \mathcal A^s$ of $\mathcal A^s$ coincides with the degeneration picture inside the algebraic subset $\mathcal J\cap\mathcal A^s$ of $\mathcal J$.

\end{remark}

\section{Degenerations of 3-dimensional complex associative algebras}\label{SectionDegen3dim}

We continue with our hypothesis that $\F=\mathbb C$ and $n=3$.
Also recall that $\mathbb K=\{x+yi\colon x,y\in\mathbb R$ with $x>0$ or ($x=0$ and $y>0)\}\subseteq\mathbb C$.

\medskip
Our aim in this section is to determine the complete degeneration picture inside $\mathcal A^s$.
Our general plan in order to achieve this is to make use of the observations in Remark~\ref{Remark5.1} together with sets of the form $T=\phi_1^{-1}(T_1)\cap\phi_2^{-1}(T_2)\cap \mathcal A^s$, where the sets $T_1$, $T_2$ ($T_1\subseteq \mathcal L$, $T_2\subseteq\mathcal J$) are algebraic sets which are also unions of $G$-orbits under the action of $G$ on $\boldsymbol\Lambda$ we are considering.
%In order to obtain part of the degeneration picture inside $\mathcal A^s$.
Since the maps $\phi_1$ and $\phi_2$ are continuous in the Zariski topology, we can see that such sets $T$ are also  algebraic and, moreover, Remark~\ref{RemarkObservations}(ii) ensures that they consist of a union of $G$-orbits.
(Note that the intersection of subsets of $\boldsymbol\Lambda$ which are unions of $G$-orbits, if it is non-empty, is also a union of $G$-orbits.)
In order to obtain the complete degeneration picture inside $\mathcal A^s$, we will need to employ additional arguments, depending for example on necessary conditions for degeneration or arguments using the ideas involved in the preliminary lemmas in Section~\ref{SectionPrelim} of this paper.

The process will be completed through a sequence of steps.
%We can take the following first steps in the above suggested process:
The corresponding sets $T_1$ and $T_2$ at each step of the process are chosen by `filtering out' closed sets as we go from bottom to top in Pictures~\ref{Picture3DimLieChild} and~\ref{Picture3DimJordChild} respectively.

\bigskip
{\bf Step 1.} We consider the set $S_1=\phi_1^{-1}(\{{\bf0}\}\cup\mathfrak m_2^o)\cap\phi_2^{-1}(\{{\bf0}\}\cup J_5^o)\cap\mathcal A^s=\{{\bf0}\}\cup \mathfrak a_4^o\cup\mathfrak a_5^o=\mathcal A^s\cap\mathcal M^*$, a closed set which is also a union of orbits from the above discussion.
From Remark~\ref{RemarkObservations}(v) and Remark~\ref{Remark5.1}(i),(ii) we get that both the sets $\{{\bf0}\}\cup\mathfrak a_4^o$ and $\{{\bf0}\}\cup\mathfrak a_5^o$ are closed.
(Alternatively, we could use the results in~\cite{IvanovaPallikaros2019}.)

\bigskip
{\bf Step 2.} We consider the set $S_2=\phi_1^{-1}(\{{\bf0}\}\cup \mathfrak l_1^o\cup\mathfrak m_3^o)\cap\phi_2^{-1}(\{{\bf0}\}\cup J_5^o)\cap\mathcal A^s=\{{\bf0}\}\cup \mathfrak l_1^o\cup \mathfrak a_6^o\subseteq \mathcal A^s\cap\mathcal M^{**}$.
From Remark~\ref{Remark5.1}(ii),(iii) we get that $\Theta(\mathfrak l_1)\in\overline{\mathfrak a_6^o}$ since $\Theta(\mathfrak a_6)\in\mathcal M^{**}-\mathcal M^*$.
Note that $\mathcal A^s\cap\mathcal M^{**}=(\{{\bf0}\}\cup\mathfrak a_4^o\cup\mathfrak a_5^o)\cup(\{{\bf0}\}\cup\mathfrak l_1^o\cup\mathfrak a_6^o)$, a union of two closed sets.
Moreover, the set $\{{\bf0}\}\cup\mathfrak l_1^o$ is closed, again by Remark~\ref{Remark5.1}(iii).

\bigskip
{\bf Step 3.} We consider the set $S_3=\phi_1^{-1}(\{{\bf0}\}\cup \mathfrak l_1^o)\cap\phi_2^{-1}(\{{\bf0}\}\cup \mathfrak c_1^o \cup \mathfrak c_3^o)\cap\mathcal A^s = \{{\bf0}\}\cup \mathfrak l_1^o\cup \mathfrak c_1^o\cup \mathfrak c_3^o\cup \mathfrak a_2^o\cup (\bigcup\limits_{\kappa\in\mathbb K}\mathfrak a_3(\kappa)^o)=\mathcal A^s\cap\mathcal B\subseteq\mathcal A^s\cap\mathcal T$.
The sets $\{{\bf0}\}\cup\mathfrak l_1^o\,(=\mathcal A^s\cap\mathcal K)$ and $\{{\bf0}\}\cup\mathfrak c_1^o\cup \mathfrak c_3^o\,(=\mathcal A^s\cap\mathcal B\cap\mathcal C\cap\mathcal T)$ are both closed, see Remark~\ref{Remark5.1}(ii).
Hence $\{{\bf0}\}\cup\mathfrak l_1^o\cup \mathfrak c_1^o\cup \mathfrak c_3^o$ is a closed subset of $\mathcal A^s$.
Moreover, from Remark~\ref{Remark5.1}(ii),(iii) we get that $\mathfrak c_3$, $\mathfrak a_2$ and $\mathfrak a_3(\kappa)$, for all $\kappa\in\mathbb K$, all degenerate to $\mathfrak c_1$ since $\Theta(\mathfrak c_1)\in O(\boldsymbol\delta)$.

From the information about $\dim{\rm ann}_L$ in Table~\ref{TableDimmAnn}, we also get that $\Theta(\mathfrak a)\not\in\overline{\mathfrak a_2^o}$ for all $\mathfrak a\in\{\mathfrak l_1,\mathfrak c_3\}\cup\{\mathfrak a_3(\kappa)\colon \kappa\in\mathbb K\}$.
In particular, $\overline{\mathfrak a_2^o}=\{{\bf0}\}\cup\mathfrak c_1^o\cup \mathfrak a_2^o $.
%We also have that for all $\mathfrak a_3(\kappa)$, with $\kappa\in\mathbb K$, there is a degeneration $\mathfrak a_3(\kappa)\to\mathfrak c_1$ (see~C2.7).

Also from Table~\ref{TableDimmAnn}, looking now at $\dim{\rm Der}$ of the corresponding algebras, we see that $\mathfrak a_3(\kappa)\not\to\mathfrak a_2$ and $\mathfrak a_3(\kappa)\not\to\mathfrak c_3$ for all $\kappa\in\mathbb K$ and, moreover, there is no degeneration between any two members of the infinite family $\mathfrak a_3(\kappa)$ with $\kappa\in\mathbb K$.

Next, we investigate whether there is a degeneration from $\mathfrak a_3(\kappa)$ to $\mathfrak l_1$, at least for some $\kappa\in\mathbb K$, and for this it will be convenient to consider the cases $\kappa=2$ and $\kappa\in\mathbb K-\{2\}$ separately.

(a) We assume $\kappa=2$:
Let $\boldsymbol\lambda={\bf221}+{\bf331}+2({\bf321})\in\boldsymbol\Lambda$.
Then $\boldsymbol\lambda=\Theta(\mathfrak a_3(\kappa))$,
so in Example~\ref{ExamplApplA} we established in fact that $\mathfrak a_3(\kappa)\to\mathfrak l_1$ in the case $\kappa=2$.

(b) We assume $\kappa\in\mathbb K-\{2\}$:
Let $\boldsymbol\nu=\Theta(\mathfrak a_3(\kappa))={\bf221}+{\bf331}+\kappa({\bf321})\in\boldsymbol\Lambda$.
Also let $\alpha\in\mathbb C$ be a root of the polynomial $x^2+\kappa x+1\in\mathbb C[x]$.
Comparing with Lemma~\ref{Lemma5.2}, we see that our hypothesis on $\kappa$ ensures that $\alpha^2\ne1$ and that $\boldsymbol\lambda={\bf231}+-\alpha^2({\bf321})\in O(\boldsymbol\nu)$.
Invoking now Example~\ref{ExamplApplB} with $\beta=-\alpha^2\,(\ne-1)$ we get that $\overline{O(\boldsymbol\nu)}\cap\mathcal K=\overline{O(\boldsymbol\lambda)}\cap\mathcal K=\{{\bf0}\}$.
We conclude that $\mathfrak a_3(\kappa)\not\to\mathfrak l_1$ whenever $\kappa\in\mathbb K-\{2\}$.
[To see this it might be useful to recall that $\overline{O(\boldsymbol\nu)}\subseteq\mathcal A^s$ since $\boldsymbol\nu\in\mathcal A^s$, and also that $\mathcal K\cap\mathcal A^s=\{{\bf0}\}\cup\mathfrak l_1^o$.]

This completes the degeneration picture inside the closed set $S_3$:

\begin{center}
\begin{tikzpicture}
\matrix [column sep=7mm, row sep=5mm] {
\node (a6)  {$\mathfrak a_{6}$}; &   \node (a3k2)  {$\mathfrak a_{3}(\kappa=2)$}; &   \node (a3kNot2)  {$\mathfrak a_{3}(\kappa\ne2)$}; & &\node (c3)  {$\mathfrak c_{3}$}; & \node (a2)  {$\mathfrak a_{2}$};
 \\
& \node (a4)  {$\mathfrak a_4$}; & \node (l1)  {$\mathfrak l_{1}$};  & \node (c1)  {$\mathfrak c_{1}$}; &\node (a5)  {$\mathfrak a_5$};
   \\
& & \node (a0)  {$\mathfrak a_{0}$}; &  &
 \\
};
\draw[->, thin] (a3kNot2) -- (c1);
\draw[->, thin] (a3k2) -- (c1);
\draw[->, thin] (a3k2) -- (l1);
\draw[->, thin] (a6) -- (l1); %thick
\draw[->, thin] (c3) -- (c1);
\draw[->, thin] (a2) -- (c1);
\draw[->, thin] (l1) -- (a0);
\draw[->, thin] (a4) -- (a0);
\draw[->, thin] (c1) -- (a0);
\draw[->, thin] (a5) -- (a0);
\end{tikzpicture}

{\footnotesize
\refstepcounter{pict}\label{Picture3DimDegenAssocPart1}
Picture~\ref{Picture3DimDegenAssocPart1} (with $\kappa\in\mathbb K$)
}
\end{center}

\bigskip
{\bf Step 4.} We consider the set $S_4=\phi_1^{-1}(\{{\bf0}\}\cup \mathfrak l_1^o\cup \mathfrak m_4^o)\cap\phi_2^{-1}(\{ J_8^o\cup \mathfrak c_3^o\cup\mathfrak c_1^o\cup\{{\bf0}\}\})\cap \mathcal A^s=\mathfrak a_9^o\cup\mathfrak a_{10}^o\cup\mathfrak a_2^o\cup\mathfrak c_3^o\cup\mathfrak c_1^o\cup\mathfrak l_1^o\cup\{{\bf0}\}\cup(\bigcup\limits_{\kappa\in\mathbb K}\mathfrak a_3(\kappa)^o)$. %, a Zariski-closed set.

We can make the following observations.

\begin{enumerate}[(i)]
\item
We know from Remark~\ref{Remark5.1}(i) that $\{\mathfrak a_9,\mathfrak a_{10}\}$ is a $\{\mathfrak g,\tilde{\mathfrak g}\}$-pair, so $\mathfrak a_9\not\to\mathfrak a_{10}$ and $\mathfrak a_{10} \not\to\mathfrak a_{9}$ in view of Remark~\ref{RemarkObservations}(v).
(This can also be seen by either considering $\dim{\rm ann}_L$ and $\dim{\rm ann}_R$ or $\dim{\rm Der}$ of these algebras --- see Table~\ref{TableDimmAnn}).

\item
From Step 3, the subset $S_3=\mathfrak a_2^o\cup\mathfrak c_3^o\cup\mathfrak c_1^o\cup\mathfrak l_1^o\cup\{{\bf0}\}\cup(\bigcup\limits_{\kappa\in\mathbb K}\mathfrak a_3(\kappa)^o)$ of $S_4$ is closed and we know everything about the degenerations between the members of $S_3$.
Hence, in order to determine all possible degenerations between the members of the closed set $S_4$ it suffices to determine all possible degenerations from $\mathfrak a_9$ (or $\mathfrak a_{10}$) to each of the members of $S_3$.

\item
Let $\boldsymbol\lambda,\boldsymbol\mu\in\boldsymbol\Lambda$ with $\boldsymbol\lambda={\bf322}+{\bf333}$ and $\boldsymbol\mu={\bf232}+{\bf333}\,(=\tilde{\boldsymbol\lambda})$.
Then $\boldsymbol\lambda=\Theta(\mathfrak a_9)$ and $\boldsymbol\mu=\Theta(\mathfrak a_{10})$, see Table~\ref{Table3dimAssocClass}.

For $t\in\mathbb C-\{0\}$, define matrices $b_1(t),b_2(t)\in\mathop{{\rm GL}(3,\mathbb C)}$ as follows
\[
b_1(t)=
\begin{pmatrix}
t&0&-1\\
0&0&1\\
0&t&0
\end{pmatrix}
\quad\mbox{and}\quad
b_2(t)=
\begin{pmatrix}
t&-1&0\\
0&1&0\\
0&0&t
\end{pmatrix}.
\]
Then $\boldsymbol\lambda\, b_1(t)={\bf231}+t({\bf233})+t({\bf222})$.
Invoking Lemma~\ref{LemmaA} (see also argument in Example~\ref{ExamplApplA}), we get that $\mathfrak a_9\to\mathfrak a_2=\Theta^{-1}({\bf231})$.
Similarly, by computing $\boldsymbol\mu\, b_2(t)$ we also observe that $\mathfrak a_{10}\to\mathfrak a_2$.
These are examples of construction of degeneration via Lemma~\ref{LemmaC}  ---
observe that the subspace $\mathbb C$-span$(-e_1+e_2)$ is a subalgebra of both $\mathfrak a_9$ and $\mathfrak a_{10}$. %--- this subalgebra plays the role of subalgebra $\mathfrak b$ in Lemma~\ref{LemmaC} with $\mathbb C$-span$(\bar e_1,\bar e_3)$ playing the role of the space $\mathfrak g/\mathfrak b$ for $\mathfrak g\in\{\mathfrak a_9,\mathfrak a_{10}\}$.
[Remark: The observation that the pair of algebras $\{\mathfrak a_9,\mathfrak a_{10}\}$ is a  $\{\mathfrak g,\tilde{\mathfrak g}\}$-pair together with the fact that ${\Theta(\tilde{\mathfrak a_2})}\in \mathfrak a_2^o$ ensure that $\mathfrak a_9\to\mathfrak a_2$ if, and only if,  $\mathfrak a_{10}\to\mathfrak a_2$.]

\item
From the transitivity of degenerations we also get (the facts already known from~\cite{IvanovaPallikaros2019}, see Remark~\ref{Remark5.1}(iii)) that $\mathfrak g\to \mathfrak c_1$, $\mathfrak g\to\mathfrak a_0$ for $\mathfrak g\in\{\mathfrak a_9,\mathfrak a_{10}\}$, since $\mathfrak g\to \mathfrak a_2$ and $\mathfrak a_2\to \mathfrak c_1$, $\mathfrak a_2\to\mathfrak a_0$.

\item
Finally, we use the information from Table~\ref{TableDimmAnn} on $\dim{\rm ann}_L$ and  $\dim{\rm ann}_R:$

$\dim{\rm ann}_L$ gives$:$ $\mathfrak a_9\not\to \mathfrak l_1$, $\mathfrak a_9\not\to \mathfrak c_3$,   $\mathfrak a_9\not\to \mathfrak a_3(\kappa)$, for all $\kappa\in\mathbb K$,  while

$\dim{\rm ann}_R$ gives$:$ $\mathfrak a_{10}\not\to \mathfrak l_1$, $\mathfrak a_{10}\not\to \mathfrak c_3$,  $\mathfrak a_{10}\not\to \mathfrak a_3(\kappa)$, for all $\kappa\in\mathbb K$.

[Alternatively, we could use an obvious modification of the remark immediately above in order to obtain the corresponding information for $\mathfrak a_{10}$ given the information on $\mathfrak a_9$.]
\end{enumerate}

The observations (i)--(v) above supply sufficient information in order to complete the degeneration picture inside set $S_4$.

\bigskip
{\bf Step 5.} We consider  the set $S_5=\phi_1^{-1}(\{{\bf0}\}\cup \mathfrak l_1^o\cup\mathfrak m_4^o)\cap\phi_2^{-1}(J_4^o\cup\mathfrak c_3^o\cup\mathfrak c_1^o\cup\{{\bf0}\}) \cap \mathcal A^s= \mathfrak a_7^o\cup \mathfrak a_8^o\cup\mathfrak a_2^o\cup\mathfrak l_1^o\cup\mathfrak c_3^o\cup\mathfrak c_1^o\cup\{{\bf0}\}\cup (\bigcup\limits_{\kappa\in\mathbb K}\mathfrak a_3(\kappa)^o) = \mathfrak a_7^o\cup \mathfrak a_8^o \cup S_3$.
Now $\{\mathfrak a_7, \mathfrak a_8\}$ is a   $\{\mathfrak g,\tilde{\mathfrak g}\}$-pair and the set $S_3$ is closed.
Comparing with the discussion in Step~4, we get that $\mathfrak a_7\not\to\mathfrak a_8$, $\mathfrak a_8\not\to\mathfrak a_7$ and, moreover, in order to complete the degeneration picture inside the closed set $S_5$ it suffices to determine whether any of the members of the set $S_3=\mathcal A^s\cap\mathcal B$ belong to $\overline{\mathfrak g^o}$, for $\mathfrak g\in\{\mathfrak a_7, \mathfrak a_8\}$.
%there is a degeneration from $\mathfrak g$ (for $\mathfrak g\in\{\mathfrak a_7, \mathfrak a_8\}$) to each of the members of the set $S_3=\mathcal A^s\cap\mathcal B$.
We aim to use Lemma~\ref{LemmaB}.

Let $\boldsymbol\lambda'={\bf121}+{\bf211}+{\bf222}+{\bf323}\in\boldsymbol\Lambda$, so $\boldsymbol\lambda'=\Theta(\mathfrak a_7)$ from Table~\ref{Table3dimAssocClass}.
Also let $B$ be the Borel subgroup of all upper triangular matrices in $\GlC$.
It will be convenient to consider the basis $(e_1',e_2',e_3')$ of the underlying vector space $V$, where $e_1'=e_1$, $e_2'=e_3$, $e_3'=e_2$.
Note that the subspace $\mathbb C$-span$(e_1',e_2')$ is a subalgebra of $\mathfrak a_7$ isomorphic to the 2-dimensional Abelian algebra so the commutation relations remain very simple (many of the coefficients remain zero) when we act by the subgroup $B$.
The structure vector of $\mathfrak a_7$ relative to the basis $(e_1',e_2',e_3')$ is $\boldsymbol\lambda={\bf131}+{\bf311}+{\bf333}+{\bf232}$.
For $b=(b_{ij})\in B$, in particular $b_{ij}=0$ if $i>j$ and $b_{11}b_{22}b_{33}\ne0$, we then have $\boldsymbol\lambda b=b_{33}{\bf131} +b_{33}{\bf311} + b_{33}{\bf232} + \big(\frac{b_{12}b_{33}}{b_{11}}\big){\bf321}+\big(\frac{b_{13}b_{33}}{b_{11}}\big){\bf331}+b_{33}{\bf333}\in\boldsymbol\lambda'G$.

Clearly, from the above expression for $\boldsymbol\lambda b$, the following polynomials  all belong to ${\bf I}(\boldsymbol\lambda B)$, the vanishing ideal of $\boldsymbol\lambda B$:
\begin{gather*}
X_{11i},\ X_{12i},\ X_{21i},\ X_{22i}, \quad(1\le i\le3)\\
X_{231},\ X_{233},\ X_{322},\ X_{323},\ X_{332},\ X_{132},\ X_{133},\ X_{312},\ X_{313},\\
X_{232}-X_{333},\ X_{131}-X_{333},\ X_{311}-X_{333}.
\end{gather*}

Now let $\boldsymbol\mu=(\mu_{ijk})\in\overline{\boldsymbol\lambda B}$.
Then $\ev_{\boldsymbol\mu}(f)=0$ for all $f\in{\bf I}(\boldsymbol\lambda B)$, hence
\begin{gather*}
\mu_{11i}=0,\ \mu_{12i}=0,\ \mu_{21i=0},\ \mu_{22i}=0, \quad(1\le i\le3)\\
\mu_{231}=0,\ \mu_{233}=0, \mu_{322}=0,\ \mu_{323}=0,\ \mu_{332}=0,\ \mu_{132}=0,\ \mu_{133}=0,\ \mu_{312}=0,\ \mu_{313}=0,\\
\mu_{232}=\mu_{333}=\mu_{131}=\mu_{311}.
\end{gather*}
It follows that the only coefficients $\mu_{ijk}$ which can possibly be non-zero are: $\mu_{131}$, $\mu_{232}$, $\mu_{311}$, $\mu_{321}$, $\mu_{331}$ and $\mu_{333}$, with the additional restriction that $\mu_{232}=\mu_{333}=\mu_{131}=\mu_{311}$.

At this point we make the further assumption that $\boldsymbol\mu\in\mathcal B$, so we have that $\boldsymbol\mu=(\mu_{ijk})\in\overline{\boldsymbol\lambda B}\cap \mathcal B$.
From the defining conditions of $\mathcal B$, see Section~\ref{SectionPhi1Phi2}, we must have that $\mu_{331}\mu_{133}+\mu_{332}\mu_{233}+\mu_{333}\mu_{333}=0$.
But $\mu_{133}=0=\mu_{233}$ from our previous considerations, so $\mu_{333}^2=0$ and hence $\mu_{333}=0$.
We conclude that all coefficients $\mu_{ijk}$ are equal to zero except possibly $\mu_{321}$ and $\mu_{331}$.
Next, we impose the further restriction that $\boldsymbol\mu\ne{\bf0}$.
Hence it suffices to consider the following cases$:$ (i) $\mu_{321}=0$, $\mu_{331}\ne0$ and (ii) $\mu_{321}\ne0$.
[Note that there is no guarantee that any of cases (i), (ii) described above actually occur, as some other polynomials in ${\bf I}(\boldsymbol\lambda B)$ or some other defining conditions for $\mathcal B$ not already listed above could possibly impose even further restrictions on the coefficients $\mu_{ijk}$.
However, our goal at the moment is, by making use of Lemma~\ref{LemmaB}, to exclude the possibility of degeneration from $\mathfrak a_7$ to various members of the set $\Theta^{-1}(\mathcal A^s\cap\mathcal B) =\Theta^{-1}(S_3)$.]

Case (i): $\mu_{231}=0$, $\mu_{331}\ne0$.
Clearly, $\Theta(\mathfrak c_1)\in O(\boldsymbol\mu)$, so $\Theta^{-1}(\boldsymbol\mu)\simeq\mathfrak c_1$.

Case (ii): $\mu_{321}\ne0$. Then $\Theta^{-1}(\boldsymbol\mu)\simeq \mathfrak g$ where $\Theta(\mathfrak g)={\bf321}+\gamma({\bf331})$ for some $\gamma\in\mathbb C$.
We will consider subcases (ii)(a) and (ii)(b) given below$:$

Subcase (ii)(a): $\gamma=0$: Then clearly $\Theta^{-1}(\boldsymbol\mu)\simeq \mathfrak a_2$.

Subcase (ii)(b): $\gamma\ne0$: We consider $({\bf321}+\gamma{\bf331})g$, with
$g=${\footnotesize$\begin{pmatrix}
1&0&0\\
0&1&1\\
0&0&-\gamma^{-1}
\end{pmatrix}$}$\in\GlC$.
It is easy to compute that $({\bf321}+\gamma{\bf331})g=-\frac1\gamma({\bf321})$, so again $\Theta^{-1}(\boldsymbol\mu)\simeq \mathfrak a_2$.

Summing up, we have shown up to this point that $\overline{\boldsymbol\lambda B}\cap\mathcal B\subseteq\{{\bf0}\}\cup \mathfrak c_1^o\cup\mathfrak a_2^o$.
Let $U=\mathfrak l_1^o\cup\mathfrak c_3^o\cup (\bigcup\limits_{\kappa\in\mathbb K}\mathfrak a_3(\kappa)^o)$.
Then $U\subseteq \mathcal B$ from Remark~\ref{Remark5.1}(ii), hence $\mathcal B\cap U=U$ and $\overline{\boldsymbol\lambda B}\cap U=(\overline{\boldsymbol\lambda B}\cap\mathcal B)\cap U=\varnothing$.
It follows from Remark~\ref{RemarkBorelSubgroup}(iv)that $\overline{O(\boldsymbol\lambda)}\cap U=\varnothing$.
We conclude that  $\mathfrak a_7\not\to\mathfrak l_1$, $\mathfrak a_7\not\to\mathfrak c_3$ and $\mathfrak a_7\not\to\mathfrak a_3(\kappa)$, for all $\kappa\in\mathbb K$.

Recalling the expression for $\boldsymbol\lambda b$ obtained above and invoking  Lemma~\ref{LemmaA} with $t\in\mathbb C-\{0\}$ (and an argument as in Example~\ref{ExamplApplA}) we also get$:$

(i) $\mathfrak a_7\to\mathfrak a_2$, by setting $b_{33}=t$, $b_{12}=t$, $b_{11}=t^2$ and $b_{13}=t^2$, and

(ii) $\mathfrak a_7\to\mathfrak c_1$, by setting $b_{33}=t$, $b_{12}=t$, $b_{11}=t$ and $b_{13}=1$.

Alternatively, we can observe that in Example~\ref{ExamplApplC} a degeneration $\mathfrak a_7\to\mathfrak a_2$ was in fact constructed since $\Theta(\mathfrak a_7)={\bf121}+{\bf211}+{\bf222}+{\bf323}$ and ${\bf213}\in\mathfrak a_2^o$.
We can then invoke the transitivity of degenerations to establish that $\mathfrak a_7\to\mathfrak c_1$ and $\mathfrak a_7\to\mathfrak a_0$.
(These last two facts are already known since $\Theta(\mathfrak a_7)\in\boldsymbol\Lambda-\mathcal M^{**}$, see Remark~\ref{Remark5.1}(ii),(iii).)
This completes the degeneration picture inside the closed set $S_5$ since our results on degenerations still hold if we replace $\mathfrak a_7$ by $\mathfrak a_8$ in view of the fact that $\boldsymbol\lambda\in O(\tilde{\boldsymbol\lambda})$ for all $\boldsymbol\lambda\in S_3$.

\begin{center}
\begin{tikzpicture}
\matrix [column sep=7mm, row sep=5mm] {
 &    &    &   \node (a9)  {$\mathfrak a_{9}$};  &   \node (a7)  {$\mathfrak a_{7}$};  &   \node (a8)  {$\mathfrak a_{8}$};  &  \node (a10)  {$\mathfrak a_{10}$};
\\
\node (a6)  {$\mathfrak a_{6}$}; & \node (a3k2)  {$\mathfrak a_{3}(\kappa=2)$}; &   \node (a3kNot2)  {$\mathfrak a_{3}(\kappa\ne2)$}; & &\node (c3)  {$\mathfrak c_{3}$}; & \node (a2)  {$\mathfrak a_{2}$}; & &
 \\
& \node (a4)  {$\mathfrak a_4$}; & \node (l1)  {$\mathfrak l_{1}$};  & \node (c1)  {$\mathfrak c_{1}$}; &\node (a5)  {$\mathfrak a_5$}; & &
   \\
& & \node (a0)  {$\mathfrak a_{0}$}; &  & & &
 \\
};
\draw[->, thin] (a10) -- (a2);
\draw[->, thin] (a8) -- (a2);
\draw[->, thin] (a7) -- (a2);
\draw[->, thin] (a9) -- (a2);
\draw[->, thin] (a3kNot2) -- (c1);
\draw[->, thin] (a3k2) -- (c1);
\draw[->, thin] (a3k2) -- (l1);
\draw[->, thin] (a6) -- (l1); %thick
\draw[->, thin] (c3) -- (c1);
\draw[->, thin] (a2) -- (c1);
\draw[->, thin] (l1) -- (a0);
\draw[->, thin] (a4) -- (a0);
\draw[->, thin] (c1) -- (a0);
\draw[->, thin] (a5) -- (a0);
\end{tikzpicture}

{\footnotesize
\refstepcounter{pict}\label{Picture3DimDegenAssocPart2}
Picture~\ref{Picture3DimDegenAssocPart2} (with $\kappa\in\mathbb K$)
}
\end{center}

\bigskip
{\bf Step 6.} We consider the set $S_6=\phi_1^{-1}(\{{\bf0}\}\cup \mathfrak l_1^o\cup\mathfrak m_4^o)\cap\phi_2^{-1}(J_3^o\cup J_4^o\cup J_8^o\cup \mathfrak c_4^o\cup \mathfrak c_3^o\cup \mathfrak c_1^o\cup\{{\bf0}\} )\cap\mathcal A^s=\mathfrak a_{11}^o\cup \mathfrak a_{12}^o\cup \mathfrak a_9^o\cup \mathfrak a_{10}^o\cup \mathfrak a_7^o\cup \mathfrak a_8^o\cup \mathfrak c_4^o\cup \mathfrak a_2^o\cup \mathfrak c_3^o\cup \mathfrak c_1^o\cup \mathfrak l_1^o\cup \{{\bf0}\}\cup(\bigcup\limits_{\kappa\in\mathbb K}\mathfrak a_3(\kappa)^o)=\mathfrak a_{11}^o\cup\mathfrak a_{12}^o\cup\mathfrak c_4^o\cup S_4\cup S_5$. %, a Zariski-closed set from Pictures~\ref{Picture3DimLieChild} and~\ref{Picture3DimJordChild}.

The set $S_4\cup S_5$ is closed as it is the union of two closed sets and we know everything about the degeneration picture in this set from the previous steps of this process.
We also know from the information given in Picture~\ref{Picture3DimJordChild} that $\overline{\mathfrak c_4^o}=\{{\bf0}\}\cup \mathfrak c_1^o\cup\mathfrak c_4^o$.

Next, the algebras $\mathfrak a_{11}$ and $\mathfrak a_{12}$ form a  $\{\mathfrak g,\tilde{\mathfrak g}\}$-pair so there cannot be any degeneration between them.
Moreover, invoking the facts that the pairs of algebras $\{\mathfrak a_7,\mathfrak a_8\}$ and $\{\mathfrak a_9,\mathfrak a_{10}\}$  also form $\{\mathfrak g,\tilde{\mathfrak g}\}$-pairs  while ${\Theta(\tilde{\mathfrak a})}\in \mathfrak a^o$ for all $\mathfrak a\in\boldsymbol A$ with $\Theta(\mathfrak a)\in S_6-( \mathfrak a_7^o\cup\mathfrak a_8^o\cup\mathfrak a_9^o\cup\mathfrak a_{10}^o\cup\mathfrak a_{11}^o\cup\mathfrak a_{12}^o)$, the set of algebras to which $\mathfrak a_{12}$ degenerates can easily be obtained once we know the set of algebras to which $\mathfrak a_{11}$ degenerates (compare with Remark~\ref{RemarkObservations}(iv)).

By considering now $\dim{\rm Der}_{(1,0,1)}$ (see Table~\ref{TableDimmAnn}), we can exclude the possibility of degeneration from $\mathfrak a_{11}$ to each of the members of the set $\{\mathfrak a_7, \mathfrak a_{10}, \mathfrak c_3,\mathfrak l_1\}\cup\{\mathfrak a_3(\kappa)\colon \kappa\in\mathbb K\}$.
Moreover, the following holds:

{\bf Claim.} Algebra $\mathfrak a_{11}$ degenerates to each one of the members of the set $\{\mathfrak a_8, \mathfrak a_9, \mathfrak c_4, \mathfrak a_2, \mathfrak c_1, \mathfrak a_0\}$.

{\bf Proof of Claim.} From the transitivity of degenerations it suffices to prove that  $\mathfrak a_{11}\to \mathfrak c_4$, $\mathfrak a_{11}\to \mathfrak a_9$ and $\mathfrak a_{11} \to \mathfrak a_8$ (compare with Picture~\ref{Picture3DimDegenAssocPart2}).

We let $\boldsymbol\lambda=\Theta(\mathfrak a_{11})={\bf111}+{\bf322}+{\bf333}$, (see Table~\ref{Table3dimAssocClass}).
%Comparing with Example~\ref{ExamplApplA},
We also define, for $t\in\mathbb C-\{0\}$, matrices $g_1(t)$, $g_2(t)$ and $g_3(t)\in \GlC$ as follows:
\[
g_1(t)=\begin{pmatrix}
1&0&0\\
0&t&0\\
0&0&t
\end{pmatrix}, \quad
g_2(t)=\begin{pmatrix}
t&0&0\\
0&1&0\\
0&0&1
\end{pmatrix}
\quad\mbox{and}\quad
g_3(t)=\begin{pmatrix}
t&1&0\\
0&0&1\\
0&1&0
\end{pmatrix}.
\]
Then, (i) $\boldsymbol\lambda g_1(t)={\bf111}+t({\bf322})+t({\bf333})$,
(ii) $\boldsymbol\lambda g_2(t)=t({\bf111})+{\bf322}+{\bf333}$ and
(iii) $\boldsymbol\lambda g_3(t)=t({\bf111})+{\bf121}+{\bf211}+{\bf222}+{\bf233}$.
Invoking Lemma~\ref{LemmaA} (and an argument as in Example~\ref{ExamplApplA}) and comparing with the information given in Table~\ref{Table3dimAssocClass}, we get from the observations (i), (ii) and (iii) immediately above the existence, respectively, of degenerations   $\mathfrak a_{11}\to \mathfrak c_4$, $\mathfrak a_{11}\to \mathfrak a_9$ and $\mathfrak a_{11} \to \mathfrak a_8$.
This completes the degeneration picture inside the closed set $S_6$.

\bigskip
{\bf Step 7.} Finally we consider the set $S_7=\phi_1^{-1}(\{{\bf0}\}\cup\mathfrak l_1^o\cup\mathfrak m_4^o)\cap\phi_2^{-1}(\{J_2^o\cup J_8^o\cup \mathfrak c_3^o\cup \mathfrak c_2^o\cup \mathfrak c_1^o\cup \{{\bf0}\})\cap \mathcal A^s=\mathfrak a_1^o\cup\mathfrak a_2^o\cup\mathfrak a_9^o\cup\mathfrak a_{10}^o\cup\mathfrak c_3^o\cup\mathfrak c_2^o\cup\mathfrak c_1^o\cup \mathfrak l_1^o\cup\{{\bf0}\} \cup (\bigcup\limits_{\kappa\in\mathbb K}\mathfrak a_3(\kappa)^o)$. %, again a Zariski-closed set since $\phi_1$ and $\phi_2$ are continuous (see also Pictures~\ref{Picture3DimLieChild} and~\ref{Picture3DimJordChild}).

Let $\boldsymbol\nu=\Theta(\mathfrak a_1)={\bf111}+{\bf122}+\bf{212}+{\bf133}+{\bf313}+{\bf232}-{\bf322}+{\bf331}\in\boldsymbol\Lambda$, so $\boldsymbol\nu$ is the structure vector of algebra $\mathfrak a_1$ relative to the standard basis $(e_1,e_2,e_3)$ of the underlying $\mathbb C$-vector space $V$.
Consider now the basis $(f_1,f_2,f_3)$ of $V$ defined by $f_1=\frac12(e_1-e_3)$, $f_2=\frac12e_2, f_3=\frac12(e_1+e_3)$ and observe that the subspaces $\mathbb C$-span$(f_2)$, $\mathbb C$-span$(f_1, f_2)$ and $\mathbb C$-span$(f_2, f_3)$ are all ideals of~$\mathfrak a_1$.
Let $\boldsymbol\nu'$ be the structure vector of $\mathfrak a_1$ relative to the basis $(f_1,f_2,f_3)$.
Then $\boldsymbol\nu'={\bf111}+{\bf122}+{\bf232}+{\bf333}\,(\in O(\boldsymbol\nu))$.
Now define, for $t\in\mathbb C-\{0\}$, matrices $g_1(t)$, $g_2(t)\in\GlC$ by
\[
g_1(t)=\begin{pmatrix}
t&0&0\\
0&1&0\\
0&0&1
\end{pmatrix},
\quad g_2(t)=\begin{pmatrix}
1&0&0\\
0&1&0\\
0&0&t
\end{pmatrix}.
\]
It follows that $\boldsymbol\nu'g_1(t)=t({\bf111})+t({\bf122})+{\bf232}+{\bf333}\in O(\boldsymbol\nu)$ and that $\boldsymbol\nu'g_2(t)={\bf111}+{\bf122}+t({\bf232})+t({\bf333})\in O(\boldsymbol\nu)$, for $t\in\mathbb C-\{0\}$.
Invoking Lemma~\ref{LemmaA} (and an argument as in Example~\ref{ExamplApplA}) together with the information on the commutation relations given in~Table~\ref{Table3dimAssocClass}, we can see that the expression for $\boldsymbol\nu'g_1(t)$ (resp., $\boldsymbol\nu'g_2(t)$) obtained above, leads to a degeneration $\mathfrak a_1\to \mathfrak a_{10}$ (resp., $\mathfrak a_1\to\mathfrak a_9$).
[Observe that by setting $\mathfrak b=\mathbb C$-span$(f_2,f_3)$ (resp., $\mathfrak b=\mathbb C$-span$(f_1,f_2)$), the existence of the above degenerations can also be established using Lemma~\ref{LemmaC},
(compare also with Remark~\ref{Remark2.9}).]
From the transitivity of degenerations we also get $\mathfrak a_1\to\mathfrak a_2$,  (and $\mathfrak a_1\to\mathfrak c_1$, $\mathfrak a_1\to\mathfrak a_0$), see Picture~\ref{Picture3DimDegenAssocPart2}.

Next, we consider the structure vector $\boldsymbol\nu g_2(t)={\bf111}+{\bf122}+{\bf212}+{\bf133}+{\bf313}+t({\bf232})-t({\bf322})+t^2({\bf331})\in O(\boldsymbol\nu)$.
Applying Lemma~\ref{LemmaA} (compare also with Example~\ref{ExamplApplA}) we get $\mathfrak a_1\to\mathfrak c_2$.

Now let $S=\mathfrak c_3^o\cup\mathfrak l_1^o\cup(\bigcup\limits_{\kappa\in\mathbb K}\mathfrak a_3(\kappa)^o)$.
%$S=\{\mathfrak c_3,\mathfrak l_1\}\cup \{\mathfrak a_3(\kappa)\colon \kappa\in\mathbb K\}$.
In order to complete the degeneration picture inside the set $S_7$ it remains to examine whether there is a degeneration from $\mathfrak a_1$ to any one of the members of the set $S$.
It is useful to observe that $S\subseteq \mathcal B\cap\mathcal A^s\subseteq (\mathcal T\cap\tilde{\mathcal T})\cap \mathcal A^s$ (see Remark~\ref{Remark5.1}(ii)).
In addition, $\dim{\rm ann}_L\mathfrak a=1=\dim{\rm ann}_R\mathfrak a$ for all $\mathfrak a\in \Theta^{-1}(S)$, see Table~\ref{TableDimmAnn}.

For this investigation it will be convenient to consider $\boldsymbol\lambda\in O(\boldsymbol\nu)$, where $\boldsymbol\lambda=\boldsymbol\nu'g$ with $g=${\footnotesize$\begin{pmatrix}1&0&0\\0&0&1\\0&1&0\end{pmatrix}$}$\in\GlC$, and then examine the structure vector $\boldsymbol\lambda b$ where $b=(b_{ij})\in\GlC$ is upper triangular (so $b_{ij}=0$ if $i>j$ and $b_{11}b_{22}b_{33}\ne0$).

Now $\boldsymbol\lambda={\bf111}+{\bf133}+{\bf323}+{\bf222}\in O(\boldsymbol\nu)$ and $\boldsymbol\lambda b=b_{11}({\bf111})+ b_{12}({\bf121})+\frac{b_{12}b_{23}}{b_{22}}({\bf131}) + \frac{-b_{11}b_{23}}{b_{22}}({\bf132})+b_{11}({\bf133})+b_{12}({\bf211})+\frac{b_{12}(b_{12}-b_{22})}{b_{11}}({\bf221})+b_{22}({\bf222})+ \frac{b_{12}b_{23}(b_{12}-b_{22})}{b_{11}b_{22}}({\bf231})+ \frac{-b_{23}(b_{12}-b_{22})}{b_{22}}({\bf232})+b_{12}({\bf233})+b_{13}({\bf311})+ \frac{b_{13}(b_{12}-b_{22})}{b_{11}}({\bf321})+b_{22}({\bf323})+\frac{b_{13}b_{23}(b_{12}-b_{22})}{b_{11}b_{22}}({\bf331})+\frac{-b_{13}b_{23}}{b_{22}}({\bf332}) +(b_{13}+b_{23})({\bf333})$.

As a consequence, the following is a list of polynomials in ${\bf I}(\boldsymbol\lambda B)$, where $B$ is the subgroup of all upper triangular matrices in $\GlC$:
\begin{gather*}
X_{112},\ X_{113},\ X_{122},\ X_{123},\ X_{212},\ X_{213},\ X_{223},\ X_{312},\ X_{313},\ X_{322},\\
X_{121}-X_{211},\ X_{111}-X_{133},\ X_{222}-X_{323},\ X_{121}-X_{233},\\
X_{131}X_{311}+X_{332}X_{211},\ X_{331}X_{221}-X_{231}X_{321},\\
X_{331}X_{211}-X_{231}X_{311},\ X_{311}X_{221}-X_{211}X_{321}.
\end{gather*}

%Recall from (C2.6) that $\mathcal B\cap \mathcal A^s\subseteq (\mathcal T\cap\tilde{\mathcal T})\cap \mathcal A^s$.
Observe that $\overline{\boldsymbol\lambda B}\subseteq\mathcal A^s$ since $\boldsymbol\lambda B\subseteq\mathcal A^s$ and $\mathcal A^s$ is closed.
Hence, from $\mathcal B\cap \mathcal A^s\subseteq (\mathcal T\cap\tilde{\mathcal T})\cap \mathcal A^s$, we get $(\overline{\boldsymbol\lambda B})\cap\mathcal B\subseteq\mathcal T\cap\tilde{\mathcal T}$.
Now let $\boldsymbol\mu=(\mu_{ijk})\in(\overline{\boldsymbol\lambda B})\cap\mathcal B$.
Then $\boldsymbol\mu\in\mathcal T\cap\tilde{\mathcal T}$ so, for $1\le i\le 3$, we have $\sum_{j=1}^3\mu_{ijj}=0$ and $\sum_{j=1}^3\mu_{jij}=0$ (see Section~\ref{SectionPhi1Phi2} and Remark~\ref{Remark5.1}(ii)).
Moreover, $\ev_{\boldsymbol\mu}(f)=0$ for all polynomials $f\in{\bf I}(\boldsymbol\lambda B)$.
This forces $\mu_{112}=\mu_{113}=\mu_{122}=\mu_{123}=\mu_{212}=\mu_{213}=\mu_{223}=\mu_{312}=\mu_{313}=\mu_{322}=0$ and $\mu_{233}=\mu_{121}=\mu_{211}$, $\mu_{111}=\mu_{133}$, $\mu_{222}=\mu_{323}$.
Invoking the conditions forced by $\boldsymbol\mu=(\mu_{ijk})\in\mathcal T$, we also get $\mu_{111}=\mu_{133}=0$ (since $\mu_{122}=0$ and $\mu_{111}=\mu_{133}$), $2\mu_{211}+\mu_{222}=0$ (since $\mu_{211}=\mu_{233}$) and $\mu_{311}+\mu_{333}=0$ (since $\mu_{322}=0$).
So the defining conditions for $\boldsymbol\mu=(\mu_{ijk})$ to belong to $\tilde{\mathcal T}$ now give $2\mu_{222}+\mu_{211}=0$ (since $\mu_{222}=\mu_{323}$ and $\mu_{121}=\mu_{211}$) and $\mu_{131}+\mu_{232}+\mu_{333}=0$.
Also observe that the conditions $2\mu_{211}+\mu_{222}=0$, $2\mu_{222}+\mu_{211}=0$, $\mu_{222}=\mu_{323}$ and $\mu_{211}=\mu_{121}=\mu_{233}$ force $\mu_{211}=\mu_{222}=\mu_{121}=\mu_{233}=\mu_{323}=0$.

Next, we invoke the fact that $\boldsymbol\mu\in\mathcal B$, so for $1\le i,j,k,m\le n$ we have $\sum_{l=1}^n\mu_{ijl}\mu_{lkm}=0$ (see Section~\ref{SectionPhi1Phi2}).
From $\mu_{331}\mu_{133}+\mu_{332}\mu_{233}+\mu_{333}\mu_{333}=0$ we get $\mu_{333}=0$ (since $\mu_{133}=0=\mu_{233}$), and hence $\mu_{311}=0$ since $\mu_{311}+\mu_{333}=0$.

Moreover, from $\mu_{131}+\mu_{232}+\mu_{333}=0$ we now get $\mu_{131}+\mu_{232}=0$.
Also the fact that $X_{331}X_{221}-X_{231}X_{321}\in{\bf I}(\boldsymbol\lambda B)$ ensures that $\mu_{331}\mu_{221}-\mu_{231}\mu_{321}=0$.

Going back once more to the defining conditions for the algebraic set $\mathcal B$ and combining with various restrictions already obtained above we get the following further constraints on the coefficients $\mu_{ijk}$:
\begin{gather*}
\mu_{221}\mu_{131}=0,\ \mu_{221}\mu_{132}=0,\ \mu_{131}^2+\mu_{132}\mu_{231}=0,\ \mu_{321}\mu_{131}=0,\ \mu_{321}\mu_{132}=0,\\
\mu_{332}\mu_{221}=0,\ \mu_{331}\mu_{131}+\mu_{332}\mu_{231}=0,\ \mu_{331}\mu_{132}+\mu_{332}\mu_{232}=0.\qquad\qquad\qquad\qquad (*)
\end{gather*}
Summing up, the assumption $\boldsymbol\mu=(\mu_{ijk})\in(\overline{\boldsymbol\lambda B})\cap\mathcal B$ gives the constraints  that the only coefficients $\mu_{ijk}$ which can possibly be non-zero are
\[
\mu_{131},\ \mu_{132},\ \mu_{221},\ \mu_{231},\ \mu_{232},\ \mu_{321},\ \mu_{331}\ \mbox{and}\ \mu_{332}
\]
and these satisfy the conditions~$(*)$ together with the conditions $\mu_{131}+\mu_{232}=0$ and $\mu_{331}\mu_{221}-\mu_{231}\mu_{321}=0$.

We will consider the cases (i) $\mu_{131}\ne0$ and (ii) $\mu_{131}=0$ separately.

Case (i): $\mu_{131}\ne0$.
Then $\mu_{232}=-\mu_{131}\,(\ne0)$ and $\mu_{221}=\mu_{321}=0$.
Hence, all coefficients $\mu_{ijk}$ are zero except possibly $\mu_{131}$, $\mu_{132}$, $\mu_{231}$, $\mu_{232}$, $\mu_{331}$ and $\mu_{332}$.
This gives $\mathbb C$-span$(e_1,e_2)\subseteq {\rm ann}_R\Theta^{-1}(\boldsymbol\mu)$.
In particular, $\boldsymbol\mu\not\in S$.

Case (ii): $\mu_{131}=0$ (hence $\mu_{232}=0$ as well).

In this case, all coefficients $\mu_{ijk}$ are zero except possibly $\mu_{132}$, $\mu_{221}$, $\mu_{231}$, $\mu_{321}$, $\mu_{331}$ and $\mu_{332}$ and we have the constraints:
$\mu_{221}\mu_{132}=0$, $\mu_{132}\mu_{231}=0$, $\mu_{321}\mu_{132}=0$, $\mu_{332}\mu_{221}=0$, $\mu_{332}\mu_{231}=0$, $\mu_{331}\mu_{132}=0$ and $\mu_{331}\mu_{221}-\mu_{231}\mu_{321}=0$.

We will consider the subcases (ii)(a) $\mu_{132}\ne0$ and (ii)(b) $\mu_{132}=0$ separately:

Subcase (ii)(a): $\mu_{132}\ne0$:
Then, $\mu_{221}=\mu_{231}=\mu_{321}=\mu_{331}=0$.
Again we get that $\mathbb C$-span$(e_1,e_2)\subseteq {\rm ann}_R\Theta^{-1}(\boldsymbol\mu)$ so $\boldsymbol\mu\not\in S$ in this subcase.

Subcase (ii)(b): $\mu_{132}=0$.
In this subcase only $\mu_{221}$, $\mu_{231}$, $\mu_{321}$, $\mu_{331}$ and $\mu_{332}$ can possibly be non-zero and we have the constraints:
$\mu_{332}\mu_{221}=0$, $\mu_{332}\mu_{231}=0$ and $\mu_{331}\mu_{221}-\mu_{231}\mu_{321}=0$.
If $\mu_{332}\ne0$, then $\mu_{221}=0=\mu_{231}$.
Hence $\mathbb C$-span$(e_1,e_2)\subseteq {\rm ann}_L\Theta^{-1}(\boldsymbol\mu)$ leading to $\boldsymbol\mu\not\in S$.
So we assume that $\mu_{332}=0$ as well, leaving us with $\mu_{221}$, $\mu_{231}$, $\mu_{321}$, $\mu_{331}$ as the only coefficients which can possibly be non-zero and also satisfying the constraint $\mu_{331}\mu_{221}-\mu_{231}\mu_{321}=0$.

Now the matrix $\begin{pmatrix}\mu_{221}&\mu_{321}\\ \mu_{231}&\mu_{331}\end{pmatrix}\in M_2(\mathbb C)$ has determinant 0 (and hence has rank $<2$) so there exists a matrix $(\beta\ \gamma)\in\mathbb C^{1\times 2}-\{(0\ 0)\}$ satisfying $(\beta\ \gamma)\begin{pmatrix}\mu_{221}&\mu_{321}\\ \mu_{231}&\mu_{331}\end{pmatrix}=(0\ 0)$.
Inside $\Theta^{-1}(\boldsymbol\mu)$ we get: $[e_2,\beta e_2+\gamma e_3]=\beta\mu_{221}e_1+\gamma\mu_{231}e_1=0$ and $[e_3,\beta e_2+\gamma e_3] =\beta\mu_{321}e_1+\gamma\mu_{331}e_1=0$.
Hence, $\mathbb C$-span$(e_1,\beta e_2+\gamma e_3)\subseteq{\rm ann}_R\Theta^{-1}(\boldsymbol\mu)$ giving $\boldsymbol\mu\not\in S$ in this final subcase also.

We conclude that $(\overline{\boldsymbol\lambda B})\cap S\,(=(\overline{\boldsymbol\lambda B})\cap\mathcal B\cap S)=\varnothing$.
From Remark~\ref{RemarkBorelSubgroup}(ii) with $U=S$ we finally get that there is no degeneration from $\mathfrak a_1$ to any one of the members of the set $\{\mathfrak c_3,\mathfrak l_1\}\cup\{\mathfrak a_3(\kappa)\colon \kappa\in\mathbb K\}$ (information which cannot so easily be obtained by just considering various algebra invariants).
This completes the degeneration picture inside the closed set $S_7$.

\medskip
In view of Remark~\ref{Remark5.1}(iv), in order to obtain the complete degeneration picture inside the algebraic set $\mathcal A^s$ it is enough to combine the information given in Picture~\ref{Picture3DimJordChild} together with information obtained in Steps 1 to 7 of the above process (see Picture~\ref{Picture3DimDegenAssocFull}).

\begin{center}
\begin{tikzpicture}
\matrix [column sep=7mm, row sep=5mm] {
 & \node (c11)  {$\mathfrak c_{11}$}; & & & & & \\
 \node (c9)  {$\mathfrak c_{9}$}; & & \node (c10)  {$\mathfrak c_{10}$}; & & & & \\
 \node (c7)  {$\mathfrak c_{7}$}; &  \node (c6)  {$\mathfrak c_{6}$}; & \node (c8)  {$\mathfrak c_{8}$}; & \node (a11)  {$\mathfrak a_{11}$}; & & \node (a12)  {$\mathfrak a_{12}$}; & &\node (a1)  {$\mathfrak a_{1}$}; &
 \\
 & \node (c5)  {$\mathfrak c_{5}$};   & & \node (a9)  {$\mathfrak a_{9}$};& & &\node (a7)  {$\mathfrak a_{7}$}; & \node (a8)  {$\mathfrak a_{8}$};  &  \node (a10)  {$\mathfrak a_{10}$};
\\
 \node (a6)  {$\mathfrak a_{6}$}; & \node (a3k2)  {$\mathfrak a_{3}(\kappa=2)$}; &   \node (a3kNot2)  {$\mathfrak a_{3}(\kappa\ne2)$}; &\node (c3)  {$\mathfrak c_{3}$}; &\node (c2)  {$\mathfrak c_{2}$}; & \node (c4)  {$\mathfrak c_{4}$}; &\node (a2)  {$\mathfrak a_{2}$}; & & &
 \\
 \node (a4)  {$\mathfrak a_4$}; & &\node (l1)  {$\mathfrak l_{1}$};  & \node (c1)  {$\mathfrak c_{1}$}; &\node (a5)  {$\mathfrak a_5$}; & & & &
   \\
 & & \node (a0)  {$\mathfrak a_{0}$}; &  & & & & &
 \\
};
\draw[->, thin] (c11) -- (c9);
\draw[->, thin] (c11) -- (c10);
\draw[->, thin] (c10) -- (c7);
\draw[->, thin] (c10) -- (c8);
\draw[->, thin] (c9) -- (c6);
\draw[->, thin] (c9) -- (c7);
\draw[->, thin] (c9) -- (c8);
\draw[->,  thin] (c6) -- (c2);
\draw[->,  thin] (c6) -- (c5);
\draw[->, thin] (c7) -- (c5);
\draw[->, thin] (a1) -- (a9);
\draw[->, thin] (a1) -- (a10);
\draw[->, thin] (a1) -- (c2);
\draw[->,  thin] (c8) -- (c4);
\draw[->,  thin] (c8) -- (c5);
\draw[->, thin] (a11) -- (a9);
\draw[->, thin] (a11) -- (a8);
\draw[->, thin] (a11) -- (c4);
\draw[->, thin] (a12) -- (a7);
\draw[->, thin] (a12) -- (a10);
\draw[->, thin] (a12) -- (c4);
\draw[->, thin] (a10) -- (a2);
\draw[->, thin] (a8) -- (a2);
\draw[->, thin] (a7) -- (a2);
\draw[->,  thin] (c5) -- (c3);
\draw[->, thin] (a9) -- (a2);
\draw[->, thin] (a3kNot2) -- (c1);
\draw[->, thin] (a3k2) -- (c1);
\draw[->, thin] (a3k2) -- (l1);
\draw[->, thin] (a6) -- (l1); %thick
\draw[->, thin] (c2) -- (c1);
\draw[->, thin] (c3) -- (c1);
\draw[->, thin] (c4) -- (c1);
\draw[->, thin] (a2) -- (c1);
\draw[->, thin] (l1) -- (a0);
\draw[->, thin] (a4) -- (a0);
\draw[->, thin] (c1) -- (a0);
\draw[->, thin] (a5) -- (a0);
\end{tikzpicture}

{\footnotesize
\refstepcounter{pict}\label{Picture3DimDegenAssocFull}
Picture~\ref{Picture3DimDegenAssocFull}. Degenerations of 3-dimensional complex associative algebras (with $\kappa\in\mathbb K$).
}
\end{center}

\end{document}